\newtheorem{thm}{Theorem}[section]
\newtheorem{prop}[thm]{Proposition}
\newtheorem{cor}[thm]{Corollary}
\newtheorem{definition}[thm]{Definition}
\theoremstyle{definition}
 \newtheorem{rmk}{Remark}[section]
\DeclareMathOperator{\rank}{\ensuremath{\mathop{rank}}}
\DeclareMathOperator{\spec}{\ensuremath{\mathop{spec}}}
\DeclareMathOperator{\mult}{\ensuremath{\mathop{mult}}}
\DeclareMathOperator{\interior}{\ensuremath{\mathop{interior}}}
\DeclareMathOperator{\spf}{\ensuremath{\mathop{Sf}}}
\DeclareMathOperator{\image}{\ensuremath{\mathop{image}}}
\newcommand{\fub}{\mathcal{F}_{R}^{sa}}
\newcommand{\h}{\mathcal{H}}
\newcommand{\B}{\mathcal{B}}
\newcommand{\E}{\mathcal{E}}
\newcommand{\C}{\mathbb{C}}
\newcommand{\R}{\mathbb{R}}
\newcommand{\F}{\mathcal{F}}
\newcommand{\U}{\mathcal{U}}
\newcommand{\K}{\mathcal{K}}
\newcommand{\x}{\times}
\newcommand{\iso}{\cong}
\newcommand{\dx}{\{D_{x}\}}
\newcommand{\Z}{\mathbb{Z}}
\newcommand{\cont}{\subseteq}
\newcommand{\minus}{\smallsetminus}
\DeclareMathOperator{\im}{\ensuremath{\mathop{image}}}
\begin{document}

\title{Spectral multiplicity and odd K-theory}

\author{Ronald G. Douglas}
\address{Department of Mathematics\\
Texas A\&M University\\
College Station, TX}
\email{rdouglas@math.tamu.edu}
\author{Jerome Kaminker}
\address{Department of Mathematical Sciences\\IUPUI\\Indianapolis, IN}
\curraddr{Department of Mathematics\\UC Davis\\Davis, CA}
\email{kaminker@math.ucdavis.edu}
\thanks{The research of the first author was supported in part by the
  National Science Foundation}  
\subjclass{}
\date{\today} 
\begin{abstract}
  In this paper we begin a study of families of unbounded self-adjoint
  Fredholm operators with compact resolvant.  The goal is to
  incorporate the information in the eigenspaces and eigenvalues of
  the operators, particularly the role that the multiplicity of
  eigenvalues plays, in obtaining topological invariants of the
  families.

\end{abstract}
\maketitle

\section{Introduction}

In the early sixties, K-theory, a generalized cohomology theory was
defined by Atiyah and Hirzebruch, \cite{Atiyah-Hirzebruch:1} based on a construction of
Grothendieck used earlier in algebraic geometry.  Following some
spectacular applications in topology, the development of K-theory was
intrinsically related the index theorem of Atiyah and Singer, \cite{atiyah-singerI68}.  One
result of this entanglement was the realization, by Atiyah, \cite{atiyah} and
J\"anich, \cite{Janich:1} of elements of the even group as homotopy classes of maps
into the Fredholm operators.  For the odd group, Atiyah and Singer
showed in \cite{Atiyah-Singer:6}
 that one could use homotopy classes of maps into the space of self-adjoint Fredholm
operators.  

Singer raised the question, \cite{Singer:1}, of describing elements in
the cohomology of the space of self-adjoint Fredholm operators in a
concrete way.  The generator of the first cohomology group can be
related to spectral flow and the eta invariant following the work of
Atiyah-Patodi-Singer, \cite{aps:1}, on the index
formula for certain elliptic boundary value problems.  This notion has
proved pivotal in a number of directions, including physics, where it
was used in the study of anomalies, \cite{Atiyah-Singer:PNAS}.  Let us be a little more precise.

In ~\cite{Atiyah-Singer:6} Atiyah and Singer established homotopy equivalences
between various realizations of the odd K-theory group.  The proofs
involved, among other things, a careful analysis of finite portions of
the spectrum of the operators.  However, the precise relationships of
some of these objects were left unresolved.  In subsequent years,
there has been some follow up on these ideas but analyzing families
has turned to other notions such as gerbes which can involve ancillary
structure.  Our goal in this paper is to return to the framework
introduced in ~\cite{Atiyah-Singer:6} and attempt to relate the odd classes
directly to the behavior of the self-adjoint operators.  

One can show that spectral flow is determined by just a knowledge of
the behavior of the eigenvalues of the family, along with their
multiplicities.  However, that is not true for the class in K-theory.
To overcome this defect, one must bring in the behavior of the
eigenspaces as well.  Hence, we seek to unravel this dependence and
understand how to obtain invariants similar to characteristic classes.
We believe it is likely that these relationships will have
applications to physics, such as ``higher anomalies'', and lead to a
study of ``higher'' spectral flow and index theory in general.  The
authors would like to thank Ryszard Nest for hospitality during a
visit to the University of Copenhagen where this project began.  We
would also like to thank Alan Carey who took part in initial
discussions on this topic and provided valuable insights.

This paper should be viewed as a first step in which some basic
structure is revealed and some results are obtained.  One goal here is
to formulate basic questions and frame critical issues which merit
further investigation.  Before providing an overview of our results we
need to introduce some definitions and notation.

We begin with more details on the space of self-adjoint operators.  It
is well known that the space of bounded self-adjoint Fredholm
operators on a separable Hilbert space $\h$, with the norm topology,
is a classifying space for odd K-theory.  That is, for any compact
Hausdorff space, $X$, one has
\begin{equation}
  K^{1}(X) \iso [X,\F ^{sa}].
\end{equation}
Here we are letting $\F^{sa}$ denote the component of the space of
bounded self-adjoint Fredholm operators which have both positive and negative essential
spectrum. We will also consider the subspace, $\F^{sa}_{0}$,
consisting of operators, $T$, with $\|T\| \leq 1$ and the essential
spectrum of $T$ equal to $\{\pm 1 \}$. 

In applications one is often provided with a family of unbounded
Fredholm operators parametrized by a locally path connected and
connected space $X$.  To be more precise we will consider the
following subset of unbounded self-adjoint Fredholm operators.  
\begin{definition}
\label{reg}
  The regular unbounded self-adjoint Fredholm operators, denoted $\fub$, consists of linear
  operators, $T$, which satisfy
  \begin{itemize}
  \item [i)] $T$ is closed and self-adjoint,
  \item [ii)]  $(I + T^{2})^{-1}$ is compact,
  \item [iii)] $T$ has infinitely many positive and infinitely many
    negative eigenvalues.
  \end{itemize}
\end{definition}

\begin{rmk}
We may also consider non-self-adjoint operators, $T$, satisfying the
condition that $T^{*}T \in \fub$.    Statements made about $\fub$ will hold with appropriate
modifications for such operators which we will denote by $\F_{R}$.
\end{rmk}
We shall study families of operators,
\begin{equation}
  D = \{ D_{x} \}: X \to \fub.
\end{equation}
Let $C_{b}(\R)$ denote the bounded continuous functions on $\R$ for
which the limits at $\pm \infty$ exist.  We consider families which
are continuous in the sense that the function
\begin{equation}
  \Theta : C_{b}(\R) \x X \to \B (\h)
\end{equation} defined by $\Theta(f,x) = f(D_{x})$  is norm
continuous. There is a topology on the set $\fub$ for which this holds.

With the Riesz topology, which is the one determined by the bounded
transform, $D \mapsto D(I+D^{2})^{-\frac{1}{2}}$, it has been shown by
L. Nicolaescu in \cite{Nicolaescu:2007} that the space of unbounded
self-adjoint Fredholms satisfying (i) and (iii) in Definition
\ref{reg}, but not necessarily (ii), provides a classifying space
for odd K-theory. There is a related result by M. Joachim in
\cite{joachim:2003} which states that those satisfying (ii) as well
also form a classifying space.  We will not need to make use of these results
in the present paper, but they will be relevant for future work.

The main examples will be families of Dirac operators on an
odd-dimensional manifold $M$ parametrized by a
compact space, $X$.  

A family  $\{ D_{x} \}$, as above, determines an element of $K^{1}(X)$.  It is
obtained by applying the function $\chi(x) = {x}(1+x^{2})^{-1/2}$ to each
operator $D_{x}$ to obtain the family of bounded self-adjoint Fredholm operators
\begin{equation}
  \label{eq:bdd}
  \{\tilde D_{x}\} = \{ \chi(D_{x}) \}.
\end{equation}  
Then each operator in the resulting family $\{\tilde D_{x} \}$ is a
bounded self-adjoint Fredholm operator and the homotopy class of the
family yields an element of $K^{1}(X)$.

The Chern character of such a  family, viewed in real cohomology,  has components only in  odd
degrees,
\begin{equation*}
  ch(\{\chi(D_{x})\}) \in \bigoplus_{i \geq 0}{H^{2i+1}(X,\R)}.
\end{equation*}

The class in $H^{1}(X,\R)$ corresponds to spectral flow, and the class
in $H^{3}(X,\R)$ is determined by the index gerbe, c.f.
~\cite{Lott:2002}.  One goal of the present work is to develop a
method that leads to a different description of these classes and the
higher dimensional classes which obstruct the triviality of the
K-theory class associated to the family.  These obstructions are to be
determined explicitly in terms of the spectrum and eigenspaces of the
operators in the family.  This is in a spirit similar to spectral flow
as we mentioned earlier.  As a first step, in the present paper we
will consider the role that the multiplicity of eigenvalues plays.

\section{The multiplicity of eigenvalues}

We will recall some basic definitions and facts that we will use.

\begin{prop}
  Let $D$ be an unbounded self-adjoint operator as above.  Let
  $\lambda$ be an eigenvalue of $D$.  Let $\delta > 0$ be such that
  there is no other eigenvalue in $[\lambda - \delta, \lambda +
  \delta]$.  Then the spectral projection onto the eigenspace for $\lambda$
  is
  \begin{equation}
    P_{\lambda}(D) = \dfrac{1}{2 \pi i} \int_{|z| = \delta} \dfrac{dz}{(z-D)}.
  \end{equation}
and the multiplicity of $\lambda$ is given by $m({\lambda},D) = \rank (P_{\lambda}(D))$.
\end{prop}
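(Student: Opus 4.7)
The plan is to reduce everything to the spectral theorem together with the scalar Cauchy integral formula. Since $D$ is self-adjoint with $(I+D^{2})^{-1}$ compact, the spectrum of $D$ is purely discrete: it consists of isolated real eigenvalues of finite multiplicity with no accumulation point in $\R$. Enumerating the distinct eigenvalues as $\{\mu_{n}\}$ and writing $P_{n}$ for the orthogonal projection onto the corresponding eigenspace, the spectral theorem yields the strong-operator identity $D = \sum_{n} \mu_{n} P_{n}$ on the domain of $D$, with $\sum_{n} P_{n} = I$ and $P_{n} P_{m} = 0$ for $n \neq m$.

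The next step is to establish that, for any $z$ in the resolvent set of $D$, the resolvent admits the norm-convergent decomposition
\begin{equation*}
(z - D)^{-1} = \sum_{n} \frac{1}{z - \mu_{n}}\, P_{n}.
\end{equation*}
This is an immediate consequence of the preceding paragraph: since the $P_{n}$ are pairwise orthogonal, the operator norm of a partial sum equals the supremum of the scalar coefficients, and $|z - \mu_{n}|^{-1} \to 0$ as $|n| \to \infty$. On the contour $|z - \lambda| = \delta$, the hypothesis that no eigenvalue other than $\lambda$ lies in $[\lambda - \delta, \lambda + \delta]$ gives a uniform lower bound for $|z - \mu_{n}|$ away from $n = \lambda$, so the series converges uniformly in $z$ on the contour.

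Finally, I would interchange the sum with the contour integral, which is justified by the uniform convergence, and apply Cauchy's formula term by term. The scalar integral $\frac{1}{2 \pi i}\int (z - \mu_{n})^{-1}\, dz$ equals $1$ when $\mu_{n} = \lambda$, which by choice of $\delta$ occurs for exactly one index, and vanishes otherwise. Therefore
\begin{equation*}
\frac{1}{2 \pi i} \int_{|z - \lambda| = \delta} \frac{dz}{z - D} \; = \; \sum_{n} \left( \frac{1}{2 \pi i} \int_{|z - \lambda| = \delta} \frac{dz}{z - \mu_{n}} \right) P_{n} \; = \; P_{\lambda},
\end{equation*}
and the multiplicity statement follows from $m(\lambda, D) = \dim \ker(D - \lambda) = \rank P_{\lambda}$ by definition. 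The only nontrivial point is the justification of the resolvent expansion and the interchange of sum and integral, and both are entirely standard for self-adjoint operators with purely discrete spectrum.
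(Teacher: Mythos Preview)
Your argument is correct and entirely standard. The paper itself does not supply a proof for this proposition; it is stated as a recall of basic spectral theory at the start of Section~2, so there is nothing to compare against beyond noting that your justification is the expected one.

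One small remark: you write the contour as $|z-\lambda|=\delta$, whereas the paper's display has $|z|=\delta$. Your version is the correct one for the conclusion to hold for a general eigenvalue $\lambda$; the paper's formula is presumably a typographical slip (or tacitly assumes the contour is centered at $\lambda$). Your proof in fact silently repairs this.
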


Now, consider a family of operators, $\{ D_{x} \}$.  We introduce the
following terminology.

\begin{definition}
  The graph (or spectral graph) of the family $\dx$, $\Gamma(\{ D_{x}\}) \cont X \x \R$, is
  \begin{equation}
    \Gamma(\{ D_{x}\}) = \{\ (y,\lambda)\ |\ \lambda \ \text{is an eigenvalue
      of}\  D_{y}\ \}.
  \end{equation}
\end{definition}
Note that  $\Gamma(\{ D_{x}\})$ is a closed subset of $X \x \R$.  When
the specific family is clear from the context, we will simply use
$\Gamma$ and simply call it the graph, dropping the term ``spectral''.

Both the spectral projection and the multiplicity of eigenvalues define functions on
the graph of the family.  We must consider continuity properties of
these functions.  

Let $(x,\lambda) \in \Gamma$ be a point in the
graph of the family.
\begin{definition}
  A canonical neighborhood of $(x,\lambda)$ is one of the form $V
  \x (\lambda-\delta , \lambda + \delta)$, where $x \in V$,  $\delta >
  0$, such that
  \begin{itemize}
  \item [a)] $\lambda$ is the only eigenvalue of $D_{x}$ in
    $(\lambda - \delta, \lambda + \delta)$,
  \item [b)] if $k = m(D_{x},\lambda)$, then,  for each $y \in V$,
    one has
    \begin{equation*}
      \sum_{(y,\mu) \in (V \x (\lambda - \delta, \lambda + \delta))
        \cap \Gamma}
m(D_{y},\mu) = k
    \end{equation*}
  \end{itemize}
for each $y \in V$.
\end{definition}

\begin{prop}
\label{standard}
Every point $(x,\lambda)$ in $\Gamma$ admits a canonical neighborhood,
$V \x (\lambda-\delta,\lambda+\delta)$ such that if $\lambda-\delta <
\lambda_{1}(y) \leq \ldots \leq \lambda_{k}(y) < \lambda+\delta$ are 
the eigenvalues of $D_{y}$ in the given interval,  then each
$\lambda_{j}(y)$ is continuous on $V$.
\end{prop}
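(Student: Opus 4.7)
My approach is to reduce the statement to classical perturbation theory for an isolated finite-rank spectral projection, accessed via the Riesz contour integral and the norm-resolvent continuity implied by the hypothesis on $\Theta$.

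First, since $\spec(D_x)$ is discrete, pick $\delta > 0$ with $\spec(D_x) \cap [\lambda - \delta, \lambda + \delta] = \{\lambda\}$, and set $k = m(D_x, \lambda)$. Applying the continuity hypothesis to the function $t \mapsto (i - t)^{-1} \in C_b(\R)$ yields norm continuity of the resolvent $y \mapsto (i - D_y)^{-1}$, and from this one obtains the standard spectral-stability fact that whenever $a \notin \spec(D_x)$, $a$ remains outside $\spec(D_y)$ for $y$ in a neighborhood of $x$. Let $\gamma \subset \C$ be the positively oriented circle $|z - \lambda| = \delta$; it meets $\spec(D_x)$ in the empty set, and shrinking to a neighborhood $V$ of $x$ we may assume $\gamma \cap \spec(D_y) = \es$ for every $y \in V$. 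Then the Riesz projection
\begin{equation*}
Q_y = \frac{1}{2\pi i} \oint_\gamma (z - D_y)^{-1}\, dz
\end{equation*}
is a norm-continuous family of projections on $\h$ with $Q_x = P_\lambda(D_x)$ of rank $k$. Since projections close in norm have equal rank, $\rank Q_y = k$ for all $y \in V$, and the range of $Q_y$ is spanned by eigenvectors of $D_y$ for eigenvalues inside $\gamma$, i.e., in $(\lambda - \delta, \lambda + \delta)$. This gives conditions (a) and (b) of the canonical-neighborhood definition.

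For the continuity of individual eigenvalues, let $\varphi \in C_b(\R)$ be a continuous bump equal to $1$ on $[\lambda - \delta, \lambda + \delta]$ and supported in a slightly larger interval whose closure still meets $\spec(D_x)$ only at $\lambda$. Set $h(t) = t\varphi(t) \in C_b(\R)$. Shrinking $V$ further so that $\spec(D_y) \cap \operatorname{supp} \varphi \subset (\lambda - \delta, \lambda + \delta)$ for $y \in V$ (again by the spectral-stability step above), we have $h(D_y) = D_y Q_y$, so $y \mapsto D_y Q_y$ is norm-continuous. A standard local trivialization produces, after possibly shrinking $V$, a norm-continuous unitary $U_y$ on $\h$ with $U_x = I$ and $Q_y = U_y Q_x U_y^{*}$. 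Conjugating,
\begin{equation*}
A_y = U_y^{*}(D_y Q_y) U_y|_{\image Q_x}
\end{equation*}
is a norm-continuous path of self-adjoint $k \times k$ matrices whose eigenvalues, repeated with multiplicity, coincide with the eigenvalues of $D_y$ in $(\lambda - \delta, \lambda + \delta)$. The roots of the characteristic polynomial of a norm-continuous matrix-valued function vary continuously, so after ordering we obtain the required continuous functions $\lambda_1(y) \le \cdots \le \lambda_k(y)$.

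I expect the main technical obstacle to be the passage from pointwise norm-resolvent continuity to the uniform statement $\gamma \cap \spec(D_y) = \es$ over a neighborhood of $x$: this is where the continuity of $\Theta$ as a map of topological spaces genuinely enters. The remainder is standard holomorphic functional calculus and finite-dimensional matrix perturbation theory.
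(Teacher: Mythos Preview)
Your argument is correct. Both the paper and you reduce to a known perturbation fact, but by genuinely different routes. The paper applies the bounded transform $f(t)=t(1+t^2)^{-1/2}-\lambda(1+\lambda^2)^{-1/2}$ to pass to the family $\{f(D_x)\}\subset\F^{sa}_0$, and then quotes the min-max/Lipschitz estimate from Booss--Wojciechowski (namely $|\tilde\lambda_j(y)-\tilde\lambda_j(y')|\le\|f(D_y)-f(D_{y'})\|$) to get continuity of the ordered eigenvalues of the bounded family, pulling back via $f^{-1}$ at the end. Your approach instead stays with the unbounded operators: you use norm--resolvent continuity (extracted from $\Theta$ applied to $(i-\cdot)^{-1}$) to build the Riesz projection $Q_y$ directly, then localize via a bump to obtain a continuous $k\times k$ compression $A_y$, and finally invoke continuity of ordered eigenvalues for matrices. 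What you gain is self-containment---you do not need the specific citation, and the Riesz-projection step makes the canonical-neighborhood conditions (a) and (b) transparent. What the paper's route gains is an explicit modulus of continuity for the $\lambda_j$ in terms of $\|f(D_y)-f(D_{y'})\|$, which is occasionally useful downstream and is slightly shorter once the reference is granted. The one step you correctly flag as needing care---uniform avoidance of $\gamma$ by $\spec(D_y)$ over a neighborhood of $x$---follows from compactness of $\gamma$ together with the local uniform boundedness of $z\mapsto(z-D_x)^{-1}$ on $\gamma$ and the resolvent identity; this is the standard ``one-point norm--resolvent continuity implies continuity on the common resolvent set'' argument.
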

\begin{proof}
  This will follow from a corresponding statement for bounded
  operators in \cite{boos-woj:book}.  However, we will need a precise
  form of this fact so we recall the steps. Let $f(t) =
  t/\sqrt{1+t^{2}}- \lambda/\sqrt{1+\lambda^{2}}$, and consider the
  family of bounded operators $\{ f(D_{x}) \}$.  Choose a $\delta > 0$
  so that there is no other eigenvalue of $D_{x}$ in $(\lambda-\delta
  , \lambda + \delta)$.  Assume $m(D_{x},\lambda) = k$.  Consider
  $f(D_{x})$ and $f(\delta)$ and apply \cite[p.  138]{boos-woj:book},
  to obtain a neighborhood $V$ of $x$ such that for $y \in V$ there
  are exactly $k$ eigenvalues of $f(D_{y})$ in $(f(\lambda-\delta) ,
  f(\lambda+\delta))$, which we will label $$-f(\delta) < \tilde \lambda_{0}(y) \leq
  \tilde \lambda_{1}(y) \leq \ldots \leq \tilde \lambda_{k}(y) <
  f(\delta),$$  Moreover,
  $|\tilde \lambda_{j}(y) - \tilde \lambda_{j}(y')| < \|f(D_{y}) -
  f(D_{y'}) \|$. Then $\lambda_{j}(y) = f^{-1}(\tilde
  \lambda_{j}(y))$, $V$,
  and $\delta$ yield the conclusion.
\end{proof}

As a corollary one obtains the following refinement.
\begin{prop}
\label{standard2}
Let $\lambda_{0}(x) < \ldots <\lambda_{n}(x)$ be a list of the
distinct eigenvalues of $\spec({D_{x}})$ which lie in a bounded interval
of $\R$.  Then there are disjoint canonical neighborhoods of each $(x,
\lambda_{j}(x))$, all with the same base $V$.
\end{prop}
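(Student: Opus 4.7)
The plan is to start from Proposition \ref{standard} applied separately at each point $(x,\lambda_j(x))$, then harmonize the resulting neighborhoods on a common base by first shrinking the intervals so they become pairwise disjoint and then shrinking the base to preserve the multiplicity count.

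First I would invoke Proposition \ref{standard} for each $j = 0, \ldots, n$ to obtain a canonical neighborhood
\begin{equation*}
V_j \x (\lambda_j(x) - \delta_j,\, \lambda_j(x) + \delta_j)
\end{equation*}
together with continuous functions $\lambda_j^{(1)}(y) \leq \ldots \leq \lambda_j^{(k_j)}(y)$ on $V_j$ listing (with multiplicity) the eigenvalues of $D_y$ in that interval, where $k_j = m(D_x,\lambda_j(x))$. At $y=x$ each of these functions takes the value $\lambda_j(x)$.

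Next, I would choose a single $\delta > 0$ satisfying $\delta \leq \min_j \delta_j$ and
\begin{equation*}
2\delta < \min_{0 \leq j < n}\bigl(\lambda_{j+1}(x) - \lambda_j(x)\bigr),
\end{equation*}
which makes the proposed intervals $(\lambda_j(x)-\delta,\lambda_j(x)+\delta)$ pairwise disjoint. Since each $\lambda_j^{(i)}$ is continuous on $V_j$ with $\lambda_j^{(i)}(x) = \lambda_j(x)$, I may shrink $V_j$ to a smaller neighborhood $V_j' \cont V_j$ on which every $\lambda_j^{(i)}(y)$ lies in $(\lambda_j(x)-\delta,\lambda_j(x)+\delta)$. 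Setting $V = \bigcap_{j=0}^{n} V_j'$ gives the desired common base.

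It remains to check that each $V \x (\lambda_j(x)-\delta,\lambda_j(x)+\delta)$ is a canonical neighborhood. Condition (a) is immediate from $\delta \leq \delta_j$. For condition (b): by construction all $k_j$ eigenvalues (with multiplicity) of $D_y$ originally lying in $(\lambda_j(x)-\delta_j,\lambda_j(x)+\delta_j)$ still lie in the smaller interval $(\lambda_j(x)-\delta,\lambda_j(x)+\delta)$, and no new eigenvalues appear because the total count in the larger interval is exactly $k_j$ by the canonical property of $V_j \x (\lambda_j(x)-\delta_j,\lambda_j(x)+\delta_j)$. Hence the sum of multiplicities equals $k_j$. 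Disjointness of the boxes follows from the choice of $\delta$. The main obstacle is this bookkeeping step — verifying that shrinking does not allow eigenvalues to escape or extra ones to enter — and it is resolved purely by the continuity supplied by Proposition \ref{standard}.
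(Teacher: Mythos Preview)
Your proof is correct and is precisely a detailed unpacking of the paper's one-line argument, which simply asserts that the result ``follows easily from the method of proof of Proposition~\ref{standard}.'' Your shrinking-and-intersecting procedure is exactly the harmonization the paper has in mind, and your bookkeeping for condition~(b) is sound.
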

\begin{proof}
  This follows easily from the method of proof of
  Proposition~(\ref{standard})
\end{proof}

Note that there can be no points of the graph between the standard
neighborhoods obtained.

It follows easily from this argument that the multiplicity
function will be lower semi-continuous.  The next result describes the
conditions under which it is actually continuous at a point
$(x,\lambda) \in \Gamma$.

\begin{prop}
\label{multiplicity}
  Let $U$ be a canonical neighborhood of $(x,\lambda) \in \Gamma$.  The following are equivalent.
  \begin{itemize}
  \item[i)] There is a positive integer ${k}$ so that the
    multiplicity function is  constantly equal to $k$ on $U$,
  \item[ii)] There is a $\delta > 0$ and a neighborhood $V$ of $x$ so that,
    for each $y \in V$,   $D_{y}$ has only one eigenvalue in the
interval $[\lambda - \delta, \lambda + \delta]$,
\item[iii)] The function associating the spectral projection to a
  point in the graph  is norm continuous on $
  \Gamma \cap (V \x [\lambda - \delta, \lambda + \delta]) $.
  \end{itemize}
\end{prop}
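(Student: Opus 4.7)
The plan is to run the cycle (i) $\Rightarrow$ (ii) $\Rightarrow$ (iii) $\Rightarrow$ (i), leaning on the continuous parametrization of eigenvalues from Propositions~\ref{standard} and~\ref{standard2} together with the functional-calculus continuity of $\Theta$. Write $U = V_0 \x (\lambda - \delta_0, \lambda + \delta_0)$ with $k = m(D_x,\lambda)$, and use Proposition~\ref{standard2} to produce continuous functions $\lambda_1(y) \leq \cdots \leq \lambda_k(y)$ on $V_0$ exhausting the eigenvalues in the strip with multiplicity.

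For (i) $\Rightarrow$ (ii), the canonical-neighborhood identity forces the slice multiplicities to sum to $k$; combined with (i), this forces each slice to contain exactly one eigenvalue $\mu(y)$. Since $\mu$ is continuous and $\mu(x) = \lambda$, passing to a smaller $V \subset V_0$ and $\delta < \delta_0$ keeps $\mu(y)$ inside $[\lambda - \delta, \lambda + \delta]$ for $y \in V$, giving (ii). For (ii) $\Rightarrow$ (iii), the idea is to realize the spectral projection as $g(D_y)$ for a single fixed $g \in C_b(\R)$. After shrinking $V$ so that $\mu(y)$ lies in a proper subinterval $[\lambda - \delta', \lambda + \delta']$ with $\delta' < \delta$, take $g$ continuous, equal to $1$ on $[\lambda - \delta', \lambda + \delta']$, equal to $0$ outside $[\lambda - \delta, \lambda + \delta]$, and linearly interpolated between. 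Functional calculus then gives $g(D_y) = P_{\mu(y)}(D_y)$ because (ii) rules out any other eigenvalue of $D_y$ in the support of $g$, and norm continuity of $\Theta$ applied to the constant family $(g, y)$ delivers (iii) on the restricted graph.

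For (iii) $\Rightarrow$ (i), the first observation is that norm continuity of a family of orthogonal projections forces the rank to be locally constant, since any two orthogonal projections with $\|P - Q\| < 1$ are conjugate and so have equal rank; hence the multiplicity function is locally constant on the restricted graph. Set $A = \{y : \lambda_1(y) = \cdots = \lambda_k(y)\}$: this is closed as an intersection of level sets of continuous functions and contains $x$. To see $A$ is open, fix $y_0 \in A$ with common value $\mu_0$. Local constancy of multiplicity on a $\Gamma$-neighborhood of $(y_0, \mu_0)$ pins the multiplicity there to $k$, and for $y$ near $y_0$ each $(y, \lambda_j(y))$ enters this neighborhood by continuity of the $\lambda_j$. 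The multiplicity at $(y, \lambda_j(y))$ equals $\# \{i : \lambda_i(y) = \lambda_j(y)\}$, so this number being $k$ for every $j$ forces all $\lambda_i(y)$ to coincide, i.e.\ $y \in A$. Restricting to the connected component of $V_0$ containing $x$, $A$ is clopen and nonempty, hence fills that component, giving (i) on the corresponding canonical sub-neighborhood.

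The main obstacle is the arrangement in (ii) $\Rightarrow$ (iii): producing a single $g \in C_b(\R)$ whose functional calculus reproduces $P_{\mu(y)}(D_y)$ for every $y$ simultaneously requires first pinning $\mu(y)$ away from the endpoints $\lambda \pm \delta$, which is why the shrinking of $V$ is essential. The globalization in (iii) $\Rightarrow$ (i) also deserves care: norm continuity of projections delivers only local constancy on the graph, and this must be transferred across different fibers of $X$ using the continuous eigenvalue functions of Proposition~\ref{standard2} together with a connectedness/clopen argument.
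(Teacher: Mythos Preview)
The paper states Proposition~\ref{multiplicity} without proof, so there is no argument to compare against. Your cycle (i)$\Rightarrow$(ii)$\Rightarrow$(iii)$\Rightarrow$(i) is sound and uses exactly the tools the paper has set up: the canonical-neighborhood count for (i)$\Rightarrow$(ii), the functional-calculus continuity $\Theta$ with a fixed bump function for (ii)$\Rightarrow$(iii), and the rank-rigidity of nearby projections together with the continuous eigenvalue branches from Proposition~\ref{standard} for (iii)$\Rightarrow$(i).

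One small remark: the statement is slightly informal about the relation between the fixed $U$ in (i) and the $V,\delta$ appearing in (ii)--(iii), and your proof correctly treats the equivalence as a local one, ultimately establishing (i) on a canonical sub-neighborhood (the connected component of $x$ in $V_0\cap V$). That is the right reading; the paper uses the proposition only in this germinal sense. Your handling of the two delicate points---pushing $\mu(y)$ away from $\lambda\pm\delta$ before choosing $g$, and promoting local constancy of rank on $\Gamma$ to constancy on a base neighborhood via the clopen argument on $A$---is exactly what is needed and would be acceptable as the omitted proof.
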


\begin{definition}
  The family $\{ D_{x} \}$ has constant multiplicity \emph{k} at
  $(x,\lambda)$ if it satisfies the conditions in Proposition
  \ref{multiplicity}. 

\end{definition}

We will next consider  criteria for the triviality of the K-theory
class associated to a family of self-adjoint operators.

\begin{prop} 
Let $\{D_{x}\} $ be a continuous family of self-adjoint operators.  The following are equivalent.
\label{trivial}
\begin{itemize}
\item[i)] The family defines the trivial element in $K^{1}(X)$,
\item[ii)] $\{D_{x} \}$ is homotopic to a family $\{D'_{x}\}$  for which there is a
  continuous function, $\sigma : X \to \R$, such that $\sigma(x)$ is
  not an eigenvalue of $D'_{x}$ for each $x$,
\item[iii)] $\{D_{x} \}$ is homotopic to a family $\{D'_{x}\}$ for
  which there is a norm continuous family of  projections
  $\{P'_{x}\}$ with range the sum of the eigenspaces for positive eigenvalues.
\item[iv)] There exists a spectral section for the family $ \dx $,
  in the sense of Melrose-Piazza,
  \cite{Melrose-Piazza:1998}. (i.e. there is a norm continuous family
  of projections which agree with the projections onto the positive
  eigenspaces outside of a closed interval, the interval itself depending
  continuously on $x$.)
 \end{itemize}
\end{prop}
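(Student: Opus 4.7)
The plan is to prove the cycle $(iii)\Rightarrow(iv)\Rightarrow(i)\Rightarrow(ii)\Rightarrow(iii)$, treating (iii) as the most explicit spectral condition from which the others can be read off.

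The legs $(iii)\Rightarrow(iv)$ and $(i)\Rightarrow(ii)$ are essentially formal. For the first, the family $\{P'_{x}\}$ is itself a spectral section, since by hypothesis it agrees with the projection onto the positive eigenspaces on all of $\spec(D'_{x})$, and thus outside any fixed closed interval. For the second, the classifying-space realization of $K^{1}(X)$ by $\fub$ in the Riesz topology (Nicolaescu/Joachim, as cited in the introduction) identifies triviality of the class with null-homotopy of the family in $\fub$; the null-homotopy can be arranged to end at a constant $D_{0}$ with $0\notin\spec(D_{0})$, and then $\{D'_{x}\}\equiv D_{0}$ with $\sigma(x)\equiv 0$ realizes (ii).

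For $(ii)\Rightarrow(iii)$, I would use the shift homotopy $D'_{x}(t)=D'_{x}-t\sigma(x)$, $t\in[0,1]$. Each intermediate operator lies in $\fub$ because the conditions of Definition \ref{reg} are preserved under bounded scalar shifts, and the family varies continuously in $x$ since $\sigma$ does. The endpoint $D''_{x}:=D'_{x}-\sigma(x)$ satisfies $0\notin\spec(D''_{x})$. Applying Proposition \ref{standard} and compactness of the resolvent produces a continuous function $\epsilon(x)>0$ with $(-\epsilon(x),\epsilon(x))\cap\spec(D''_{x})=\es$. Choosing a family $\{g_{x}\}\subset C_{b}(\R)$, varying continuously in $x$, with $g_{x}\equiv 0$ on $(-\infty,-\epsilon(x)/2]$ and $g_{x}\equiv 1$ on $[\epsilon(x)/2,\infty)$, the projection $P'_{x}:=g_{x}(D''_{x})$ has range the sum of positive eigenspaces and is norm continuous in $x$ by the continuity hypothesis on the evaluation map $\Theta$.

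The main obstacle is $(iv)\Rightarrow(i)$. Writing $P_{x}^{+}$ for the projection onto the sum of positive eigenspaces of $D_{x}$, the correction $F_{x}:=Q_{x}-P_{x}^{+}$ is a finite-rank self-adjoint operator supported on the finite-dimensional span of eigenvectors with eigenvalues in $[a(x),b(x)]$. Since $P_{x}^{+}$ commutes with $\chi(D_{x})$, the off-diagonal block $C_{x}:=Q_{x}\chi(D_{x})(I-Q_{x})$ in the splitting $\h=Q_{x}\h\oplus(I-Q_{x})\h$ expands into a sum of terms each involving $F_{x}$ as a factor, and so is finite-rank and in particular compact. Linear interpolation therefore provides a path in $\F^{sa}$ of compact perturbations of $\chi(D_{x})$ that kills the off-diagonal blocks. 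The remaining block-diagonal operator differs on each block from $\pm I$ by a compact term---this uses that $\chi(D_{x})-(2P_{x}^{+}-I)$ is compact, since the eigenvalues of $\chi(D_{x})$ tend to $\pm 1$---so a further linear interpolation with $\pm I|_{Q_{x}\h}$ and $\mp I|_{(I-Q_{x})\h}$ remains in $\F^{sa}$ and lands at the invertible symmetry $2Q_{x}-I$. Finally, by Kuiper's theorem the space of projections on $\h$ with infinite-dimensional range and kernel is contractible, so $\{Q_{x}\}$ is null-homotopic as a map into that space, and hence $\{2Q_{x}-I\}$ deforms to a constant in $\F^{sa}$, proving that the original class is zero in $K^{1}(X)$. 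The delicate point throughout is retention of the Fredholm property and continuous dependence on $x$, both of which follow from the compact-perturbation structure that pins the essential spectrum at $\{\pm 1\}$.
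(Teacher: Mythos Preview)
Your cycle has a gap at $(iii)\Rightarrow(iv)$. Condition (iii) produces a norm-continuous family of positive-eigenspace projections $\{P'_{x}\}$ for a \emph{homotopic} family $\{D'_{x}\}$, and you correctly observe that this is a spectral section for $\{D'_{x}\}$. But (iv), as stated, demands a spectral section for the \emph{original} family $\{D_{x}\}$. There is no direct way to transport a spectral section along a homotopy in $\fub$: the intermediate families may have spectra behaving arbitrarily near zero, and nothing in your argument constructs projections adapted to $D_{x}$ itself.

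In effect, your cycle establishes the equivalence of (i), (ii), (iii), and the weaker statement ``some homotopic family admits a spectral section,'' together with the one-way implication $(iv)\Rightarrow(i)$. What remains is precisely $(i)\Rightarrow(iv)$: given that the $K^{1}$-class vanishes, build a spectral section for $\{D_{x}\}$ itself. This is the substantive direction of Proposition~1 in Melrose--Piazza, and it is not formal; their construction patches together local finite-rank corrections of the (generally discontinuous) positive projections into a globally continuous family, with the vanishing of the index class appearing as the obstruction. The paper's own proof simply defers to that argument. Your treatment of $(iv)\Rightarrow(i)$ via block-diagonalization with respect to $Q_{x}$ and Kuiper's theorem is correct and gives a pleasantly explicit alternative to one half of the Melrose--Piazza equivalence, but it does not substitute for the other half.
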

\begin{proof}
This follows using the steps in the proof of Proposition 1 in Melrose-Piazza, \cite{Melrose-Piazza:1998}.
\end{proof}
\section{Spectral exhaustions and spectral flow}

In this section we will prove the existence and essential uniqueness
of spectral exhaustions.  Let $\{D_x\}$ be a continuous family of operators
parametrized by the compact space $X$, which we assume for now is a
simplicial complex.

\begin{definition}
  A spectral exhaustion for the family $\{ D_{x} \}$ is a family,
   of
  continuous functions $\mu_{n} : X \to \R$, indexed by $\Z$, satisfying
  \begin{itemize}
  \item[i)] $\mu_{n}(x)$ is an eigenvalue of $D_{x}$ for each $x$,
  \item[ii)]  $\{\mu_{n}(x) : n \in \Z \}$
  exhausts the spectrum of $D_{x}$ counting multiplicity, for each $x$,
\item [iii)] for each $x$ and for each $n \in \Z$, $\mu_{n}(x) \leq \mu_{n+1}(x)$.
  \end{itemize}
\end{definition}

\begin{rmk}
\label{remark1}
Note that, if the graphs of functions $\mu_{n}$ and $\mu_{n-1}$ are
disjoint and the parameter space $X$ is connected, then $\mu_{n}(x) >
\mu_{n-1}(x)$, for all $x$, so $\sigma(x) = \dfrac{1}{2}(\mu_{n}(x) -
\mu_{n-1}(x))$ satisfies condition (ii) of Proposition
\ref{trivial}.  Thus, the K-theory class of a family admitting a 
 spectral exhaustion with this property is trivial.
\end{rmk}
\begin{definition}
  An enumeration of the spectrum of an  operator $D \in \fub$ is a function $e_{D} : \Z \to
  \R$ mapping $\Z$ onto the spectrum of $D$ and satisfying
\begin{enumerate}
  \item [i)] if $\lambda$ is an eigenvalue of $D$ of multiplicity
    $k$, then there is an integer $N$ such that $\lambda = e_{D}(N) = e_{D}(N+1) =
    \ldots =e_{D}(N+k)$, and
  \item [ii)] $e_{D}(n) \leq e_{D}(n+1) $ for all $n$.
  \end{enumerate}
\end{definition}

Our goal in this section is to show that, if the spectral flow of the
family $\{D_x\}$ is zero, one can construct an enumeration of the the
spectrum of $D_{x}$, for each $x$, in such a way that the functions
$\mu_{n}(x) = e_{D_{x}}(n)$ are continuous.  Thus, we will obtain a
spectral exhaustion for $\{D_x\}$.

\begin{prop}
  Given an operator, $D$, an enumeration of the spectrum always exists
  and any two differ by translation by an integer.
\end{prop}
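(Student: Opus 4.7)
The plan is to exploit the spectral properties built into Definition \ref{reg}: for $D \in \fub$, self-adjointness together with compactness of $(I+D^{2})^{-1}$ forces $D$ to have purely discrete spectrum consisting of real eigenvalues of finite multiplicity whose only accumulation points are $\pm\infty$, and condition (iii) guarantees that this spectrum is infinite in both directions. Thus $\spec(D)$, counted with multiplicity, is order-isomorphic to $\Z$, and the content of the proposition is simply to choose such an order-isomorphism and to see that the choice is essentially unique.

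For existence, list the distinct positive eigenvalues as $\lambda_{1}^{+} < \lambda_{2}^{+} < \cdots$ and the distinct non-positive ones as $\lambda_{1}^{-} > \lambda_{2}^{-} > \cdots$, both sequences infinite by (iii), with multiplicities $m_{j}^{\pm} = m(D,\lambda_{j}^{\pm})$. Assign $\lambda_{1}^{+}$ to the block of indices $[0,\, m_{1}^{+}-1]$, $\lambda_{2}^{+}$ to $[m_{1}^{+},\, m_{1}^{+}+m_{2}^{+}-1]$, and so on for $n\geq 0$; similarly assign $\lambda_{1}^{-}$ to $[-m_{1}^{-},\, -1]$, $\lambda_{2}^{-}$ to $[-m_{1}^{-}-m_{2}^{-},\, -m_{1}^{-}-1]$, and so on for $n<0$. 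Because eigenvalues accumulate only at $\pm\infty$, these blocks exhaust $\Z$, and the resulting function $e_{D}\colon\Z\to\R$ is non-decreasing, surjects onto $\spec(D)$, and places each eigenvalue on a consecutive run of length equal to its multiplicity, so both defining conditions hold.

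For uniqueness, let $e$ and $e'$ be two enumerations. Fix any eigenvalue $\lambda$; condition (i) forces each enumeration to place $\lambda$ on a block of $m(D,\lambda)$ consecutive integers, say $[a,a+m-1]$ for $e$ and $[b,b+m-1]$ for $e'$. Put $k=a-b$. Monotonicity and the exhaustion by the spectrum force the block immediately above (resp.\ below) $[a,a+m-1]$ in $e$ to correspond to the next larger (resp.\ smaller) distinct eigenvalue of $D$, which is exactly the eigenvalue labelling the block immediately above (resp.\ below) $[b,b+m-1]$ in $e'$; the block lengths match because they are both equal to the multiplicity of that eigenvalue. Inducting on the distinct eigenvalues above and below $\lambda$, which form two sequences of type $\omega$ by the accumulation statement, yields $e(n)=e'(n+k)$ for all $n\in\Z$. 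There is no serious obstacle: everything reduces to the observation that a bi-infinite, two-sided unbounded, order-preserving enumeration of a countable ordered multi-set is unique up to integer shift, and the only care required is in bookkeeping the multiplicity blocks, which is why the construction is phrased blockwise from the start.
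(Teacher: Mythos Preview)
Your argument is correct and follows essentially the same strategy as the paper's: anchor an enumeration at a chosen eigenvalue, extend uniquely in both directions by consecutive multiplicity blocks, and for uniqueness compare the position of a fixed eigenvalue in the two enumerations to read off the integer shift. The only slip is the sign of the shift---if $e$ places $\lambda$ on $[a,a+m-1]$ and $e'$ on $[b,b+m-1]$, then $e(n)=e'(n+b-a)$ rather than $e'(n+a-b)$---but this is harmless.
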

\begin{proof}
  Choose an eigenvalue, $\lambda$, of multiplicity $k$.  We set
  $e_{D}(0) = \lambda$ and $ e_{D}(-k+1) = \ldots =
  e_{D}(0) = \lambda$.  One can now uniquely extend this labeling to the rest
  of the spectrum.  It is easy to check that this process provides an
  enumeration of the spectrum of $D$.  Now suppose that $f_{D}$ is
  another one.  We will show that there is an $N $ such that $f_{D}(n
  + N) = e_{D}(n)$ for all $n$.  Let $\lambda$ be a point in the
  spectrum and   let $n_{0}$,
  $m_{0}$ be the largest integers so that $e_{D}(n_{0}) = \lambda =
  f_{D}(m_{0})$.  Let $N = m_{0} - n_{0}$.  Then it is easy to check
  that $e_{D}(n) = f_{D}(n+N)$ for all $n$.
\end{proof}

Note that the existence of an integer $n$ such that $e_{D}(n) =
f_{D}(n)$ is not sufficient to guarantee that $e_{D} = f_{D}$. However, if
there is an integer $N$ such that $e_{D}(N) = f_{D}(N)$  and
$e_{D}(N+1) > e_{D}(N) $, $f_{D}(N+1) > f_{D}(N) $, then it is the
case that $e_{D} = f_{D}$.

Fix $x \in X$ and let $\lambda$ be an eigenvalue of $D_{x}$.  Choose
an enumeration of the spectrum of $D_{x}$ satisfying
\begin{align*}
  e_{D_{x}}(0) & = \lambda \\
   e_{D_{x}}(1) &= \lambda' > \lambda.
\end{align*}

Find canonical neighborhoods
$W = V \x (\lambda -
  \delta, \lambda + \delta) $,
 $W' =  V \x (\lambda'-
  \delta', \lambda' + \delta') $ of $(x,\lambda)$ and $(x,\lambda')$
  respectively.

Let $\mu_{0}(y) = \max \{\lambda\  |\  \lambda \in \spec
(D_{y})\  \text{and}\  (y,\lambda) \in W \}$.  Similarly, let $\mu_{1}(y) = \min \{\lambda\  |\  \lambda \in \spec
(D_{y})\  \text{and}\  (y,\lambda) \in W' \}$.
\begin{prop}
  The functions $\mu_{0}$ and $\mu_{1}$ are continuous on $V$.
\end{prop}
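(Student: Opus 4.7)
The plan is to deduce continuity of $\mu_0$ and $\mu_1$ directly from Proposition~\ref{standard}, which already gives us continuous parametrizations of the individual eigenvalues inside a canonical neighborhood.

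First, I would apply Proposition~\ref{standard} to the canonical neighborhood $W = V \times (\lambda - \delta, \lambda + \delta)$ of $(x,\lambda)$. Shrinking $V$ if necessary, this produces continuous functions
\[
\lambda_{1}(y) \leq \lambda_{2}(y) \leq \cdots \leq \lambda_{k}(y)
\]
on $V$, where $k = m(D_x,\lambda)$, such that $\{\lambda_j(y)\}_{j=1}^{k}$ is exactly the list (with multiplicity) of eigenvalues of $D_y$ lying in $(\lambda - \delta, \lambda + \delta)$. Because the canonical neighborhood condition guarantees that the total multiplicity in the interval is constantly $k$ on $V$, every eigenvalue of $D_y$ falling in $W$ appears in this list, and conversely each $\lambda_j(y)$ is such an eigenvalue. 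The analogous statement holds in $W'$, producing continuous functions $\lambda'_{1}(y) \leq \cdots \leq \lambda'_{k'}(y)$ on (a possibly further shrunken) $V$.

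Second, I would observe that
\[
\mu_{0}(y) = \max\bigl\{\lambda_{1}(y), \ldots, \lambda_{k}(y)\bigr\} = \lambda_{k}(y),
\qquad
\mu_{1}(y) = \min\bigl\{\lambda'_{1}(y), \ldots, \lambda'_{k'}(y)\bigr\} = \lambda'_{1}(y).
\]
Since the maximum (respectively minimum) of finitely many continuous real-valued functions is continuous, this identification immediately yields the continuity of $\mu_0$ and $\mu_1$ on $V$.

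There is essentially no serious obstacle here; the content of the proposition lies entirely in Proposition~\ref{standard}. The only point that requires a moment of care is verifying that the pointwise set $\{\lambda : \lambda \in \spec(D_y),\ (y,\lambda)\in W\}$ really coincides with the range of the enumeration $\{\lambda_j(y)\}$ on $V$; this is exactly what the canonical neighborhood condition, together with the continuity of each $\lambda_j$, provides, since any stray eigenvalue entering or leaving $W$ would violate the constant total multiplicity property.
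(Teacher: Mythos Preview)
Your proof is correct and follows essentially the same approach as the paper: apply Proposition~\ref{standard} to list the eigenvalues in the canonical neighborhood as continuous functions $\lambda_1(y)\le\cdots\le\lambda_k(y)$, then identify $\mu_0(y)=\lambda_k(y)$ (and analogously $\mu_1(y)=\lambda'_1(y)$). The paper states this in one line, and your extra remark that the max of finitely many continuous functions is continuous is harmless but unnecessary, since $\lambda_k$ is already continuous by Proposition~\ref{standard}.
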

\begin{proof}
 It will be sufficient to consider $\mu_{0}$, the case of $\mu_{1}$
 being similar.  Let $e_{0}(y)\leq \ldots \leq e_{k}(y)$ be the part
 of the spectrum of  $D_{y}$ in $ (\lambda-\delta,\lambda+\delta)$. Then
 $\mu_{0}(y) = e_{k}(y)$, and by the remark after
 Proposition~\ref{standard}, $\mu_{0}(y)$ is continuous.  
\end{proof}

Using $\mu_{0}$ and $\mu_{1}$ we define a spectral exhaustion over
$V$ by taking, for each $y \in V$, the unique (not just up to
translation) enumeration consistent with those choices.  Thus, we have
$\mu_{n}(y)$ defined for each integer $n$ and each $y \in V$.

\begin{prop}
  The functions $\mu_{n}$  are continuous on $V$ and, hence,
  $\{\mu_{n} \}$ is a spectral exhaustion over $V$.
\end{prop}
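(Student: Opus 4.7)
The plan is to reduce continuity of each $\mu_n$ to the continuous eigenvalue-labelings provided by Propositions~\ref{standard} and~\ref{standard2}. Fix $y_0 \in V$ and an integer $n$; I will exhibit a neighborhood $V' \subseteq V$ of $y_0$ on which $\mu_n$ coincides with one of the continuous functions produced by a canonical-neighborhood covering.

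First I list the distinct eigenvalues of $D_{y_0}$ in a bounded interval as
\begin{equation*}
  \alpha_{-p} < \cdots < \alpha_0 = \mu_0(y_0) < \alpha_1 = \mu_1(y_0) < \cdots < \alpha_q,
\end{equation*}
choosing $p$ and $q$ large enough that $\mu_n(y_0)$ appears among $\alpha_{-p+1}, \ldots, \alpha_{q-1}$; this is possible because $D_{y_0}$ has infinitely many eigenvalues of both signs. Proposition~\ref{standard2} supplies mutually disjoint canonical neighborhoods $U_j$ of $(y_0, \alpha_j)$ sharing a common base $V' \subseteq V$. Inside each $U_j$, Proposition~\ref{standard} provides continuous functions $\lambda_{j,1}(y) \leq \cdots \leq \lambda_{j, k_j}(y)$ on $V'$ that exhaust the spectrum of $D_y$ within $U_j$, where $k_j = m(D_{y_0}, \alpha_j)$. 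By the observation immediately following Proposition~\ref{standard2}, for $y \in V'$ there are no points of the graph between the $U_j$'s, so the portion of $\spec(D_y)$ in the interval is precisely the ordered concatenation of these lists.

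With this picture in place, $\mu_0(y) = \lambda_{0, k_0}(y)$ and $\mu_1(y) = \lambda_{1, 1}(y)$ are consecutive distinct eigenvalues of $D_y$, so the unique enumeration of $\spec(D_y)$ consistent with these values is well defined and proceeds by stepping through the concatenated list, incrementing the integer index by one at each step along the multiplicity-counted ordering. A single bookkeeping count based on the multiplicities $k_j$ identifies $\mu_n(y)$ with a specific $\lambda_{j_0, i_0}(y)$ where the pair $(j_0, i_0)$ is the same for every $y \in V'$. Continuity of $\lambda_{j_0, i_0}$ then yields continuity of $\mu_n$ at $y_0$, and since $y_0 \in V$ was arbitrary, $\mu_n$ is continuous on $V$.

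The main obstacle is the bookkeeping in the previous paragraph: verifying that the index pair $(j_0, i_0)$ is genuinely independent of $y \in V'$. This rests on the total-multiplicity property built into the definition of a canonical neighborhood --- each $U_j$ contains exactly $k_j$ eigenvalues of $D_y$ counted with multiplicity for every $y \in V'$ --- together with the pinning of the enumeration at $n = 0, 1$ by the continuous anchors $\mu_0$ and $\mu_1$. Once continuity of each $\mu_n$ is established, properties (i)--(iii) in the definition of spectral exhaustion follow immediately from the construction, completing the proof.
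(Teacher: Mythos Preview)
Your argument is correct and follows essentially the same route as the paper's proof: pass to canonical neighborhoods over a smaller base $V' \ni y_0$, use the constant total multiplicity in each neighborhood to do a counting argument, and identify $\mu_n$ with one of the continuous eigenvalue branches from Proposition~\ref{standard}. The paper phrases the count as $n = N + r$ with $N$ the number of eigenvalues below the top canonical neighborhood, which is exactly your ``bookkeeping'' identifying $(j_0,i_0)$; your version is somewhat more carefully written in distinguishing distinct eigenvalues from their multiplicities, but the content is the same.
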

\begin{proof}
  Choose a point $y \in V$.  Taking a possibly smaller neighborhood
  $V'$ of $y$, we get $n+1$ disjoint canonical neigborhoods of the form $V' \x
  (\tilde \mu_{j}(y) - \delta_{j} ,\tilde \mu_{j}(y) + \delta_{j}) $,
  where $\tilde \mu_{j}(y)$ are the  eigenvalues of $D_{y}$
  from $\mu_{0}(y)$ to $\mu_{n}(y)$ listed multiply.  Now, for each $z\in V'$,
  $\mu_{n}(z)$ is in the  canonical neighborhood corresponding to the
  greatest real interval and it
  corresponds to one of the 
  eigenvalues $\lambda_{r}(z)$ in
  it.    We claim it must be the same $r$
  for each $z$ in $V'$.  To see this, let $N$ be the number of eigenvalues in
  the canonical neighborhoods below the top one and let $r$ be the
  index corresponding to $\mu_{n}(y)$.  Then $n = N +r$.  If we look
  at a point $z$ and  $\mu_{n}(z) = \lambda_{r'}(z)$, then we still must
  have $n=N+r'$, so that $r=r'$.  Thus, by Proposition \ref{standard},
  $\mu_{n}(z)$ varies continuously.
\end{proof}

Doing this construction in a neighborhood of each point $x \in X$, we
obtain a family of spectral exhaustions, each over an element of an
open cover, $\{ V_{i} \}$, where we may assume the open sets are
connected.  On the overlaps, any two exhaustions differ by an integer, so we obtain an
integer valued 1-cochain relative to $\{ V_{i} \}$ by taking the
difference of the partial exaustions, $\nu_{ij}= \mu_{0,i}|_{V_{i}\cap V_{j}} -
\mu_{0,j}|_{V_{i}\cap V_{j}} : V_{i}\cap V_{j} \to \Z$.  It is easily checked to be a
cocycle and its cohomology class in $\check H^{1}(X,\Z)$ will be
defined to be the spectral flow of the family, $\spf ( \{D_x\})$.  It
is straightforward to see that this definition agrees with other definitions of
spectral flow. (c.f.  \cite{boos-woj:book}).
\begin{thm} 
  A spectral
  exhaustion exists for the family $\dx$ if and only if the spectral
  flow of the family is zero, $\spf (\{D_x\}) = 0$.
\end{thm}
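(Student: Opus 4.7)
The plan is to show each direction by directly comparing a hypothetical global spectral exhaustion with the locally constructed exhaustions $\{\mu_{n,i}\}$ on the open cover $\{V_i\}$ used in the definition of $\spf(\{D_x\})$.

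For the ``only if'' direction, suppose a global spectral exhaustion $\{\mu_n\}_{n\in\Z}$ exists. Restricting to each $V_i$ yields a continuous exhaustion of the spectrum of $D_y$ for $y\in V_i$, and in particular each $\mu_n|_{V_i}$ is an enumeration in the sense of the earlier proposition. Compared to the locally constructed enumeration $\{\mu_{n,i}\}$, both are enumerations of $\spec(D_y)$ for each $y\in V_i$, so by the uniqueness-up-to-translation statement they differ by a locally constant integer $n_i \in \Z$ (using connectedness of $V_i$): $\mu_{0,i}(y)=\mu_{n_i}(y)$ for all $y\in V_i$. On $V_i\cap V_j$ one therefore gets $\nu_{ij}=\mu_{0,i}-\mu_{0,j}=n_j-n_i$, which is the coboundary of the 0-cochain $\{-n_i\}$. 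Hence $\spf(\{D_x\})=0$ in $\check H^1(X,\Z)$.

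For the ``if'' direction, suppose $\spf(\{D_x\})=0$, so after passing to a suitable refinement of $\{V_i\}$ (still with connected open sets, which we may arrange since $X$ is a simplicial complex) we may write $\nu_{ij}=n_j-n_i$ for integers $\{n_i\}$. Replace each local exhaustion $\{\mu_{k,i}\}_k$ by its re-indexed version $\tilde\mu_{k,i}(y):=\mu_{k+n_i,i}(y)$; this is again a spectral exhaustion over $V_i$. By the comment following the enumeration proposition, two enumerations that agree for some single index $N$ and strictly increase at $N$ must agree for all indices; applied pointwise on $V_i\cap V_j$, the identity $\tilde\mu_{0,i}-\tilde\mu_{0,j}=(\mu_{0,i}-\mu_{0,j})-(n_i-n_j)=\nu_{ij}-(n_i-n_j)=0$ plus the strict-increase requirement (which may require slight refinement of the cover to points where consecutive eigenvalues separate, combined with Proposition \ref{multiplicity} to handle collisions) forces $\tilde\mu_{k,i}=\tilde\mu_{k,j}$ on $V_i\cap V_j$ for every $k$. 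Thus the $\tilde\mu_{k,i}$ patch together to globally defined continuous functions $\mu_k:X\to\R$, giving the desired spectral exhaustion.

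The main obstacle I anticipate is the compatibility step in the ``if'' direction: two enumerations whose $\mu_0$ values happen to coincide at some point need not be equal as enumerations if that value is a repeated eigenvalue, and coincidentally-labeled strictly increasing enumerations can fail to remain so globally when eigenvalues collide on overlaps. I would handle this by exploiting the canonical-neighborhood formalism of Proposition \ref{standard2}: choose the cover $\{V_i\}$ fine enough that, over each $V_i$, both enumerations have the same block structure of equal consecutive values (dictated by the multiplicity pattern described in Proposition \ref{standard2}), and then match the blocks rather than individual indices. Once the cover is chosen this way, the integer shift $n_i$ is constant on each $V_i$, the re-indexing $\tilde\mu_{k,i}$ is well-defined, and the block structure forces agreement on overlaps. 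Continuity of each $\mu_k$ then follows from the continuity established in Proposition \ref{standard} together with the fact that the $\tilde\mu_{k,i}$ are continuous on each $V_i$.
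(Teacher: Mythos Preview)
Your approach is the same as the paper's, and the ``only if'' direction is fine. But in the ``if'' direction you are making the argument harder than it needs to be, and in doing so you introduce a computational error and a non-issue that you then try to patch.

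The point you are missing is what $\nu_{ij}$ actually is. The statement ``any two enumerations differ by translation by an integer'' means that on $V_i\cap V_j$ there is a unique locally constant integer $\nu_{ij}$ with
\[
\mu_{k,i}(y)=\mu_{k+\nu_{ij},\,j}(y)\qquad\text{for \emph{every} }k\in\Z\text{ and every }y\in V_i\cap V_j.
\]
(The paper's expression $\mu_{0,i}-\mu_{0,j}$ is shorthand for this index shift, not a subtraction of eigenvalues.) Once $\nu_{ij}=n_j-n_i$, setting $\tilde\mu_{k,i}:=\mu_{k+n_i,i}$ gives
\[
\tilde\mu_{k,i}=\mu_{k+n_i,i}=\mu_{k+n_i+\nu_{ij},\,j}=\mu_{k+n_j,\,j}=\tilde\mu_{k,j}
\]
on overlaps, for \emph{all} $k$, immediately. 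There is nothing further to check: no appeal to the ``agreement at a strictly increasing index'' remark, no refinement of the cover, no block-matching via Proposition~\ref{standard2}. Your line $\tilde\mu_{0,i}-\tilde\mu_{0,j}=(\mu_{0,i}-\mu_{0,j})-(n_i-n_j)$ conflates an index shift with a value shift and is not valid; but since the full identity above holds by definition of $\nu_{ij}$, that computation is unnecessary anyway. The ``obstacle'' you anticipate concerning repeated eigenvalues does not arise, because the translation integer is determined by the pair of enumerations as a whole, not by agreement at a single index.
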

\begin{proof}
  If $\spf (\{D_x\}) = 0$ then the cocycle, which is defined with
  respect to the open cover $\{V_{i}\}$, is
    a coboundary, so that $\delta(\sigma) = \nu$ for some cochain
  $\sigma$. Then the 0-cochain with components $\mu_{n,i} - \sigma_{n,i}$ can be
  used to define global functions $\mu_{n}$.  These $\mu_{n}$'s
  provide the required exhaustion.

For the converse, if an exhaustion exists, this determines the choices
in constructing the cocycle representing $\spf (\{D_x\})$, and since
the locally defined exhaustion functions all piece together to yield
global functions, the class is equal to zero.
\end{proof}

\section{Families with spectrum of constant multiplicity}

In this section we will obtain the first results relating spectral
multiplicity to K-theory. Recall that  we assume that the
parameter space  is a finite simplicial
complex.  While this assumption is not always necessary, the  topology
issues that would arise with additional generality are not fundamental ones.

\begin{prop}
  Suppose that the family $\{ D_{x} \}$ has constant multiplicity at
  each point of a component, $\tilde X$, of $\Gamma$.  Then $pr_{1} :
  \tilde X \to X$ is a covering. 
\end{prop}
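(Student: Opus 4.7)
My plan is to prove this by first showing that $pr_{1}: \tilde X \to X$ is a local homeomorphism, and then deducing the covering property by constructing an evenly covered neighborhood around each $x \in X$. The local homeomorphism part is immediate from the hypothesis: at any $(x,\lambda) \in \tilde X$, constant multiplicity via Proposition~\ref{multiplicity} supplies a canonical neighborhood $V \times (\lambda-\delta, \lambda+\delta)$ on which each $D_{y}$, $y \in V$, has a unique eigenvalue $\lambda(y)$ in the interval, and by Proposition~\ref{standard} the branch $\lambda(\cdot)$ is continuous. Therefore $\tilde X \cap (V \times (\lambda-\delta, \lambda+\delta))$ is exactly the graph of $\lambda(\cdot)$, and $pr_{1}$ restricts to a homeomorphism of this set onto $V$.

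For the covering property, fix $x \in X$ and let $F_{x} = pr_{1}^{-1}(x) \cap \tilde X = \{\lambda^{(\alpha)}\}$, which is a discrete subset of $\spec D_{x}$. For each index $\alpha$ take a canonical neighborhood $V^{(\alpha)} \times J^{(\alpha)}$ of $(x,\lambda^{(\alpha)})$ with the intervals $J^{(\alpha)}$ chosen pairwise disjoint, and let $\lambda^{(\alpha)}(\cdot)$ be the continuous eigenvalue branch on $V^{(\alpha)}$. The aim is to produce an open $V \ni x$ with $V \subseteq V^{(\alpha)}$ for every $\alpha$ such that $pr_{1}^{-1}(V) \cap \tilde X = \bigsqcup_{\alpha}\{(y, \lambda^{(\alpha)}(y)) : y \in V\}$. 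When $F_{x}$ is finite, Proposition~\ref{standard2} immediately produces a common base $V$; the absence of stray points over $V$ then follows from a limit argument: any sequence $(y_{n}, \mu_{n}) \in \tilde X$ with $y_{n} \to x$, $\{\mu_{n}\}$ bounded, and $\mu_{n} \notin \bigcup_{\alpha} J^{(\alpha)}$, would, along a subsequence, converge to some $(x, \mu^{*}) \in \tilde X$ by closedness of $\tilde X$ in $X \times \R$ (it being a component of the closed set $\Gamma$), forcing $\mu^{*} \in F_{x}$ and contradicting the disjointness assumption.

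The main obstacle is the case when $F_{x}$ is infinite, a situation that genuinely arises, as in a spiraling family over $S^{1}$. Here both the existence of a single open $V$ contained in every $V^{(\alpha)}$ and the exclusion of unbounded stray eigenvalues require uniform control of the canonical neighborhoods as $|\lambda^{(\alpha)}|$ grows. I plan to derive this uniformity from the assumed norm continuity of the functional calculus map $\Theta: C_{b}(\R) \times X \to \B(\h)$, specialized to the bounded transform $f(t) = t/\sqrt{1+t^{2}}$, together with compactness of $X$: these give uniform finite-rank approximability of the resolvent family $\{(I+D_{y}^{2})^{-1}\}_{y \in X}$, which bounds the number of eigenvalues of $D_{y}$ in any fixed interval of $\R$ uniformly in $y$ and permits applying Proposition~\ref{standard2} coherently across growing finite truncations of $F_{x}$ to extract the desired common base.
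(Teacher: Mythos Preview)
Your local-homeomorphism argument matches the paper's proof exactly; indeed, the paper stops there and simply asserts that ``it then follows that each component of $\Gamma$ is a covering of $X$.''  You are right to recognize that the passage from local homeomorphism to covering deserves justification, and your treatment of the finite-fiber case is sound.

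The gap is in the infinite-fiber case.  You aim to produce a single open $V$ contained in every $V^{(\alpha)}$, but the uniform bound on the number of eigenvalues of $D_y$ in fixed bounded intervals (which your resolvent-compactness argument does yield) does not obviously deliver this: the size of the canonical base $V^{(\alpha)}$ is governed by the gap between $\lambda^{(\alpha)}$ and its neighbours, and in the bounded-transform scale these gaps shrink to zero as $|\lambda^{(\alpha)}|\to\infty$, so no single operator-norm $\epsilon$ confines every eigenvalue branch to its own interval $J^{(\alpha)}$.  A cleaner route is to invoke the local spectral exhaustion already built in Section~3.  Propositions~3.4--3.5 furnish a \emph{single} connected neighborhood $V$ of $x$ together with continuous functions $\mu_n:V\to\R$, $n\in\Z$, exhausting $\spec(D_y)$ for every $y\in V$.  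Each set $\{(y,\mu_n(y)):y\in V\}$ is connected, hence lies in one component of $\Gamma$; thus $\tilde X\cap(V\times\R)$ is the union of those with $(x,\mu_n(x))\in\tilde X$.  Constant multiplicity on $\tilde X$ combined with connectedness of $V$ forces any two of these graphs either to coincide or to be disjoint (the set where $\mu_n=\mu_m$ is closed by continuity and open by Proposition~2.4(ii)), so $V$ is evenly covered.  An equivalent and equally clean alternative is to use the local exhaustions to verify unique path lifting for $pr_1|_{\tilde X}$ and then appeal to the standard fact that a local homeomorphism over a locally simply connected base with unique path lifting is a covering.
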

\begin{proof}
  We use Proposition \ref{multiplicity} (ii) which states that, for
  each $x \in X$ and each eigenvalue $\lambda$ of $D_{x}$ there is a
  neighborhood, $V$ and a $\delta > 0$ such that for each $y \in V$,
  $D_{y}$ has only one eigenvalue in the interval $[\lambda - \delta,
  \lambda + \delta]$.  Then the function, $\sigma_{x,\lambda} : V \to
  \R$, which sends $y$ to that eigenvalue, is continuous.  

  It then follows  that each
  component of $\Gamma(D_{x}) $ is a covering of $X$.
\end{proof}

We defined the spectral flow of a family $\{D_{x} \}$ to be a 
1-dimensional cohomology class,
\begin{equation*}
  \spf (\{D_{x}\}) \in \check H^{1}(X,\Z).
\end{equation*}
This class defines a homomorphism, for which we will use the same notation,
\begin{equation}
  \spf(\{D_{x}\}) : \pi_{1}(X) \to \Z. 
\end{equation}

The following is an easy consequence of the definitions.
\begin{prop}
  If a  component, $\tilde X$, of $\Gamma$  is a covering, $pr_{1}: \tilde X \to
  X$, then it corresponds to the
  homomorphism $\spf (\{ D_{x} \}) : \pi_{1}(X) \to \Z$. i.e. $\im
  ({pr_{1}}_{*}) = \ker (\spf(\{ D_{x} \}))$.  
\end{prop}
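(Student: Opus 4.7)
The plan is to invoke the classification of covering spaces: a loop class $[\gamma] \in \pi_1(X, x_0)$ lies in $\im(pr_{1*})$ if and only if $\gamma$ lifts to a closed loop in $\tilde X$ starting at a chosen basepoint $(x_0, \lambda_0)$. The task therefore reduces to showing that this monodromy condition is captured exactly by the spectral flow pairing, i.e.\ $\gamma$ lifts iff $\spf(\{D_x\})([\gamma]) = 0$.

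To make the identification, work with the open cover $\{V_i\}$ and the local spectral exhaustions $\{\mu_{n,i}\}$ used in Section 3 to build the cocycle $\nu_{ij}$, which encodes the integer shift between the two enumerations on $V_i \cap V_j$. Given a loop $\gamma : [0,1] \to X$, choose a subdivision $0 = t_0 < t_1 < \cdots < t_N = 1$ and indices $i_0, \ldots, i_{N-1}$ with $\gamma([t_k, t_{k+1}]) \subset V_{i_k}$ and $V_{i_N} := V_{i_0}$, so that the Čech pairing with $[\gamma]$ is the telescoping sum
\begin{equation*}
\spf([\gamma]) = \sum_{k=0}^{N-1} \nu_{i_k, i_{k+1}}(\gamma(t_{k+1})).
\end{equation*}
The constant-multiplicity hypothesis and Proposition \ref{multiplicity} give a unique continuous function $\lambda : [0,1] \to \R$ with $\lambda(0) = \lambda_0$ and $(\gamma(t), \lambda(t)) \in \tilde X$ for all $t$; this is the unique lift of $\gamma$ to $\tilde X$. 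On each segment $[t_k, t_{k+1}]$ there is an integer $n_k$ with $\mu_{n_k, i_k}(\gamma(t)) = \lambda(t)$, where, when the multiplicity $m$ exceeds $1$, we take the lowest index in the multiplicity block as a canonical representative. That $n_k$ is constant in $t$ follows from continuity of the enumeration together with the fact that within $V_{i_k}$ no other eigenvalue collides with $\lambda(t)$. At each crossover time $t_{k+1}$, the defining relation of $\nu_{i_k, i_{k+1}}$ forces $n_{k+1} = n_k + \nu_{i_k, i_{k+1}}(\gamma(t_{k+1}))$, and telescoping yields $n_N - n_0 = \spf([\gamma])$.

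Finally, the loop lifts to a closed loop in $\tilde X$ iff $\lambda(1) = \lambda_0$. Since $V_{i_N} = V_{i_0}$, we compare $\mu_{n_N, i_0}(x_0) = \lambda(1)$ with $\mu_{n_0, i_0}(x_0) = \lambda_0$; with the lowest-index convention for the multiplicity block, this equality is equivalent to $n_N = n_0$, i.e.\ to $\spf([\gamma]) = 0$. This proves $\im(pr_{1*}) = \ker(\spf(\{D_x\}))$.

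The main bookkeeping subtlety is the case of constant multiplicity $m > 1$: the eigenvalue $\lambda(y)$ then occupies a block of $m$ consecutive slots in every local enumeration, so one must verify that the cocycle $\nu_{ij}$ translates these blocks rigidly, in order to justify the transition formula $n_{k+1} = n_k + \nu_{i_k, i_{k+1}}$ on the nose. This is built into the definition of $\nu_{ij}$ as the integer shift between the two local enumerations, but it must be made explicit to rule out an ambiguity of the form ``the tracked index could have shifted by anything in $\{0, \ldots, m-1\}$ upon returning to the base.''
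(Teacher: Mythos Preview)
Your argument is correct and is exactly the unpacking one would expect: invoke the lifting criterion for coverings and then identify the monodromy of the lift with the \v{C}ech pairing defining $\spf$. The paper itself offers no proof here---it simply asserts that the statement is ``an easy consequence of the definitions''---so there is nothing to compare against beyond noting that your write-up makes explicit the steps the authors leave to the reader. The bookkeeping you flag in the last paragraph (rigid translation of the multiplicity block under $\nu_{ij}$, and the lowest-index convention ensuring $\lambda(1)=\lambda_0 \Leftrightarrow n_N=n_0$) is the only place one could stumble, and you have handled it.
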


The next results give a criterion for the existence of
a spectral exhaustion with disjoint graphs.  
\begin{thm}
\label{cmtriv2}
  Let $ \{ D_{x} \} $ be a family with spectrum of constant
  multiplicity; that is, there exists an integer $k$ such that $m(D_{x}, \lambda) = k$, 
  for each $(x,\lambda)$ in $\Gamma$.  Assume that the spectral flow of the family is zero,
  \begin{equation*}
    \spf( \{ D_{x} \} ) = 0.
  \end{equation*}
Then a spectral exhaustion with functions having disjoint images, (except for repeated
functions due to multiplicity),  exists.
\end{thm}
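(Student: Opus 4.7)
The plan is to build the required exhaustion directly from the connected components of the spectral graph $\Gamma$, using the two preceding propositions of this section. The first already gives that, under the constant multiplicity assumption, each connected component $\tilde X$ of $\Gamma$ is a covering of $X$ via $pr_1$. The second identifies this covering by $\im({pr_1}_*) = \ker(\spf(\{D_x\}))$. Under the hypothesis $\spf(\{D_x\}) = 0$ the kernel is all of $\pi_1(X)$, so $pr_1\colon \tilde X \to X$ is a connected single-sheeted covering, hence a homeomorphism. Therefore every component of $\Gamma$ is the graph of a continuous function from $X$ to $\R$, and the set of components is countably infinite because $\spec(D_x)$ is countable and unbounded in both directions.

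Enumerate these functions as $\tilde\mu_n\colon X \to \R$, $n \in \Z$. Since the components of $\Gamma$ are pairwise disjoint subsets of $X \x \R$, the graphs of the $\tilde\mu_n$ are pairwise disjoint, and at every $x$ the values $\tilde\mu_n(x)$ are distinct eigenvalues of $D_x$. Fix a basepoint $x_0$ and reindex so that $\tilde\mu_n(x_0) < \tilde\mu_{n+1}(x_0)$ for all $n$. The continuous function $\tilde\mu_{n+1} - \tilde\mu_n$ is nowhere zero by disjointness of graphs, so connectedness of $X$ forces its sign to be constant, yielding $\tilde\mu_n(x) < \tilde\mu_{n+1}(x)$ for every $x \in X$.

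To account for multiplicity, set $\mu_{kn+j} := \tilde\mu_n$ for $n \in \Z$ and $0 \leq j < k$. The constant multiplicity hypothesis ensures that every eigenvalue of every $D_x$ is counted exactly $k$ times, that $\mu_m \leq \mu_{m+1}$ holds everywhere, and that $\spec(D_x)$ is exhausted with multiplicity, so $\{\mu_m\}$ is a spectral exhaustion. Its graphs are pairwise disjoint except for the $k$-fold repetitions within each block coming from multiplicity, which is exactly the exception permitted by the theorem. The substantive work is carried by the two preceding propositions; the only delicate point is the global consistency of the ordering of the $\tilde\mu_n$, which as noted follows from the disjoint-graph property combined with connectedness of $X$.
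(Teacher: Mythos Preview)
Your proof is correct and follows essentially the same route as the paper: both use the two preceding propositions to conclude that each component of $\Gamma$ is a one-sheeted cover of $X$ when $\spf=0$, hence the graph of a continuous function, and then assemble the exhaustion from these. You supply a bit more detail than the paper does, namely the connectedness argument for the global consistency of the ordering and the explicit $k$-fold repetition to account for multiplicity, but the underlying argument is the same.
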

\begin{proof}
  Since the multiplicity of the covering is constant, each component
  is a covering.
  Moreover, each of these coverings corresponds to the homomorphism
  \begin{equation*}
    sf(\{D_{x}\}): \pi_{1}(X,x_{0}) \to \Z,
  \end{equation*}
  given by spectral flow.  Thus, if the spectral flow of the family is
  zero, each of the coverings is a homeomorphism, so that $\Gamma
  \iso X \x \text{spec}(D_{x_{0}})$, for some point $x_{0} \in X$. Enumerate
  the spectrum of $D_{x_{0}}$ as $\{ \lambda_{n}(x_{0}) \}$ and let
  $\tilde X_{n}$ be the component of $\Gamma$ containing
  $\lambda_{n}(x_{0})$.  Then set $\mu_{n}(x) =
  pr_{2}\circ(pr_{1}|_{\tilde X_{n}})^{-1} $. These functions satisfy
  the requirements to be a spectral exhaustion, and their graphs,
  being the components of $\Gamma$, are disjoint.
\end{proof}

We obtain the following corollary from  Remark~\ref{remark1}.
\begin{cor}
\label{cmtriv}
  Let $ \{ D_{x} \} $ be a family with spectrum of constant
  multiplicity.  Assume that the spectral flow of the family is zero,
  \begin{equation*}
    \spf( \{ D_{x} \} ) = 0.
  \end{equation*}
Then the family $\dx$ is trivial in K-theory.
\end{cor}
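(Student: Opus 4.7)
The plan is to chain Theorem \ref{cmtriv2}, Remark \ref{remark1}, and Proposition \ref{trivial}. Under the hypotheses of the corollary, Theorem \ref{cmtriv2} already supplies a spectral exhaustion $\{\mu_{n}\}_{n\in\Z}$ for $\dx$ whose graphs are pairwise disjoint apart from the coincidences forced by the constant multiplicity $k$. Essentially all the substantive work of the corollary has been absorbed into that theorem; what remains is a short extraction step.

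Next, I would fix any integer $n$ and consider the pair $\mu_{n}$ and $\mu_{n+k}$. Because the multiplicity is identically $k$ on $\Gamma$, these two functions represent successive \emph{distinct} eigenvalues of $D_{x}$, so their graphs are genuinely disjoint. As in Remark \ref{remark1}, connectedness of $X$ then forces $\mu_{n}(x)<\mu_{n+k}(x)$ pointwise on $X$, and the midpoint
\[
\sigma(x) \;=\; \tfrac{1}{2}\bigl(\mu_{n}(x)+\mu_{n+k}(x)\bigr)
\]
is a continuous function $X\to\R$ lying strictly between two successive distinct eigenvalues of $D_{x}$ and therefore never coinciding with any eigenvalue of $D_{x}$.

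Finally, this verifies condition (ii) of Proposition \ref{trivial} for the family $\dx$ itself, with $D'_{x}=D_{x}$ and no homotopy required. The proposition then yields (i), namely triviality of the K-theory class associated to $\dx$. The only point requiring care is the bookkeeping around the multiplicity, which is precisely why one must skip $k$ indices in the exhaustion in order to produce two exhaustion functions with disjoint graphs; I do not anticipate any genuine obstacle beyond this, since the real content has been isolated in Theorem \ref{cmtriv2}.
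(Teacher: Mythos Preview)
Your proposal is correct and follows essentially the same route as the paper, which derives the corollary directly from Theorem~\ref{cmtriv2} via Remark~\ref{remark1} and Proposition~\ref{trivial}. Your added care in skipping $k$ indices to guarantee that $\mu_{n}$ and $\mu_{n+k}$ have genuinely disjoint graphs is exactly the bookkeeping that makes Remark~\ref{remark1} applicable in the constant-multiplicity setting, and is implicit in the paper's one-line justification.
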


It is also worth  noting the following result.
\begin{cor}
\label{compact}
  Suppose that some component of $\Gamma$ is compact.  Then the
  K-theory class of the family is trivial.
\end{cor}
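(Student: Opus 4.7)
The plan is to invoke Proposition~\ref{trivial}(ii) by exhibiting, with no homotopy needed, a continuous function $\sigma : X \to \R$ whose graph lies entirely in the complement of $\Gamma$. The compactness of $\tilde X$ and its separation from the rest of $\Gamma$ will let me place $\sigma(x)$ just above the top of $\tilde X|_x$.

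First I would observe that by Proposition~\ref{standard} the graph $\Gamma$ is locally a finite union of continuous graphs through a common point, so its connected components are both open and closed. Hence $\tilde X$ is clopen in $\Gamma$, making $\Gamma \setminus \tilde X$ closed in $X \times \R$. Since $\tilde X$ is compact and disjoint from this closed set, the distance $\epsilon := d(\tilde X, \Gamma \setminus \tilde X)$ is strictly positive. I would also note that $pr_{1}(\tilde X) = X$: the image is open by the local branches supplied by canonical neighborhoods, closed by compactness of $\tilde X$, and $X$ is connected.

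The crux of the argument is to check that $\mu(x) := \max\{\lambda : (x,\lambda) \in \tilde X\}$ is continuous on $X$. Upper semicontinuity of $\mu$ is immediate from compactness of $\tilde X$: any subsequential limit of $(y_{n}, \mu(y_{n}))$ lies in $\tilde X$ and is therefore bounded above by $\mu$ at the limit point. Lower semicontinuity comes from Proposition~\ref{standard}: a canonical neighborhood of $(x_{0}, \mu(x_{0}))$ produces a continuous local branch $\lambda^{*}(y)$ with $\lambda^{*}(x_{0}) = \mu(x_{0})$, and whose graph---being connected through the point $(x_{0}, \mu(x_{0})) \in \tilde X$---sits inside the component $\tilde X$. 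This yields $\mu(y) \geq \lambda^{*}(y)$ near $x_{0}$, so $\liminf_{y \to x_{0}} \mu(y) \geq \mu(x_{0})$.

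With $\mu$ continuous, I take $\sigma(x) := \mu(x) + \epsilon/2$. Then $(x,\sigma(x))$ lies at vertical distance $\epsilon/2 < \epsilon$ from the point $(x, \mu(x)) \in \tilde X$, so by the separation established above it cannot lie in $\Gamma \setminus \tilde X$; and $\sigma(x) > \mu(x) \geq \lambda$ for every $\lambda \in \tilde X|_{x}$, so $(x,\sigma(x))$ is not in $\tilde X$ either. Thus $\sigma(x) \notin \spec(D_{x})$ for all $x$, and Proposition~\ref{trivial}(ii) yields triviality of the K-theory class. I expect the continuity of $\mu$ to be the main obstacle: a priori this maximum is only upper semicontinuous, and the clean lower bound furnished by a single local branch that genuinely sits inside the component $\tilde X$ is the decisive input.
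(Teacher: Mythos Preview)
Your argument is correct and takes a genuinely different route from the paper's. The paper works within the standing hypothesis of Section~4 that the family has constant multiplicity: it invokes Proposition~4.1 to view $pr_{1}:\tilde X\to X$ as a covering, then uses Proposition~4.2 to identify the number of sheets with $|\im(\spf(\{D_{x}\}))|$; compactness forces this to be finite, hence zero as a subgroup of $\Z$, so $\spf=0$ and Corollary~\ref{cmtriv} finishes. Your approach bypasses the covering-space and spectral-flow machinery entirely: you exploit only the local path-connectedness of $\Gamma$ (from Proposition~\ref{standard}) to get that $\tilde X$ is clopen, and then build a gap function $\sigma$ directly from the compactness separation. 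This is more elementary and, notably, does not use the constant-multiplicity hypothesis at all---so your proof establishes the corollary in greater generality than the paper's argument does. The paper's route, on the other hand, extracts the intermediate conclusion $\spf(\{D_{x}\})=0$, which is of independent interest and ties the result into the spectral-exhaustion framework of Section~3.
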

\begin{proof}
  Let the component $\tilde X$ be compact. Then the number of
  sheets in the cover is the cardinality of $\pi_{1}(\tilde X)
  \iso \im (\spf(\{D_{x}\}))$, which must be finite.  However, this is a
  subgroup of $\Z$, so it will have to be zero.  Thus, the spectral
  flow of the family is zero and its class is trivial by Proposition
  \ref{cmtriv}.
\end{proof}

Finally, we consider how the hypothesis of constant multiplicity can
be replaced by an asymptotic version.

\begin{thm}
Let $\{D_{x}\}$ be a family with $\spf(\{D_{x}\})=0$
  Suppose that there exists an integer $N$ such that if $(x,\lambda)
  \in \Gamma$ and $\lambda > N$ then the family, $\{D_{x}\}$, has
  constant multiplicity at $(x,\lambda)$.  Then the class of the
  family is trivial in $K^{1}(X)$.
\end{thm}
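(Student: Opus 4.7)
The plan is to produce, using the spectral exhaustion made available by the assumption $\spf(\{D_{x}\}) = 0$, a pair of consecutive enumeration functions whose graphs are disjoint; Remark~\ref{remark1} will then deliver triviality of the K-theory class directly, and no homotopy of the family is needed.

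First I would invoke the spectral exhaustion theorem of Section~3 to obtain continuous functions $\mu_{n} : X \to \R$, $n \in \Z$, enumerating the spectrum of each $D_{x}$ in nondecreasing order with multiplicity. Because each $D_{x}$ has infinitely many positive eigenvalues accumulating only at $+\infty$, one has $\mu_{n}(x) \to +\infty$ pointwise in $x$. Since $\mu_{n} \leq \mu_{n+1}$, the sets $\{x \in X : \mu_{n}(x) > N\}$ form an increasing open cover of the compact space $X$, so a finite-subcover argument yields an index $n_{0}$ such that $\mu_{n}(x) > N$ for every $x \in X$ and every $n \geq n_{0}$.

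For $n \geq n_{0}$ I would then analyze $A_{n} := \{x \in X : \mu_{n}(x) = \mu_{n+1}(x)\}$. It is closed as the zero set of the continuous function $\mu_{n+1} - \mu_{n}$. The hard part is showing that $A_{n}$ is also \emph{open}, and this is precisely where the asymptotic constant multiplicity hypothesis is used. At a point $x_{0} \in A_{n}$ the common value $\lambda := \mu_{n}(x_{0}) > N$ is an eigenvalue of $D_{x_{0}}$ of multiplicity at least two; by hypothesis the family has constant multiplicity at $(x_{0}, \lambda)$, so Proposition~\ref{multiplicity}(ii) supplies a neighborhood $V$ of $x_{0}$ and a $\delta > 0$ on which each $D_{y}$ has exactly one eigenvalue in $[\lambda - \delta, \lambda + \delta]$. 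Shrinking $V$ by continuity of $\mu_{n}$ and $\mu_{n+1}$, I can ensure that both values lie in $(\lambda - \delta, \lambda + \delta)$ for every $y$ in the smaller neighborhood, so they must agree with that unique eigenvalue. Hence $V \subseteq A_{n}$.

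Since $X$ is connected, each $A_{n}$ with $n \geq n_{0}$ is either all of $X$ or empty. The alternative $A_{n} = X$ for every $n \geq n_{0}$ would force $\mu_{n} \equiv \mu_{n_{0}}$ for all such $n$, contradicting $\mu_{n}(x) \to +\infty$. So I can choose $n \geq n_{0}$ with $A_{n} = \es$, giving $\mu_{n}(x) < \mu_{n+1}(x)$ strictly everywhere on $X$. The graphs of $\mu_{n}$ and $\mu_{n+1}$ are then disjoint, and Remark~\ref{remark1} concludes that the K-theory class of $\{D_{x}\}$ is trivial. (Equivalently, $\sigma(x) := \tfrac{1}{2}(\mu_{n}(x) + \mu_{n+1}(x))$ is a continuous function avoiding $\spec(D_{x})$ at every point, so Proposition~\ref{trivial}(ii) applies.)
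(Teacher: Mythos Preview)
Your argument is correct. Both you and the paper begin identically: invoke the spectral exhaustion furnished by $\spf=0$, then use compactness of $X$ and monotone pointwise divergence of $\mu_n$ to find an index beyond which all exhaustion functions take values above $N$. From there the paths diverge. The paper appeals to the covering-space picture developed earlier in Section~4: it argues that the image of a suitable $\mu_m$ lies in a component of $\Gamma$ that is a covering of $X$ (by Proposition~4.1, since constant multiplicity holds there), that this component is compact and connected, and then invokes Corollary~\ref{compact} (and implicitly Theorem~\ref{cmtriv2}) to conclude triviality. Your route is more direct and self-contained: the clopen argument on $A_n=\{\mu_n=\mu_{n+1}\}$ using Proposition~\ref{multiplicity}(ii) immediately produces a pair of consecutive exhaustion functions with disjoint graphs, and Remark~\ref{remark1} finishes. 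Your approach bypasses the covering-space interpretation entirely and needs none of the intermediate results 4.1--4.5; the paper's version, on the other hand, situates the theorem as a natural extension of the constant-multiplicity machinery already built. Both reach the same endpoint---a strict separation $\mu_n<\mu_{n+1}$ somewhere above $N$---but yours gets there with less overhead.
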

\begin{proof}
  
  Let $\Gamma_{R} = \{ (x,\lambda) : \lambda > R \}$. We will show
  that there is an $R > N$ so that some component of $\Gamma_{R}$ is a
  covering of $X$.  If so, then as in \ref{cmtriv2}, $\spf(\{D_{x}\})
  = 0$ will imply that this component is compact and by Corollary
  \ref{compact} the K-theory class of the family will be trivial.

Thus, we must show that there is a path component, $\tilde X$, of $\Gamma$
which is contained in $\Gamma_{R}$ for some $R > N$. Since $\spf (\dx)
= 0$ a spectral exhaustion, $\mu_{n}$, exists. Let $\Gamma_{n} =
\image{\mu_{n}}$. For each $x \in X$ there exists an $n_{x}$ and a
  neighborhood of $x$, $U_{x}$, such that $\mu_{n}(y) > N+1$ for all
  $y \in U_{x}$.  Get a finite subcover, $U_{x_{1}}, \ldots,
  U_{x_{k}}$, and let $m = \max \{n_{x_{i}} \}$.  Then $\mu_{m}(x) >
  N+1$ for all $x \in X$.  This implies that the image of $\mu_{m}$ is
  a cover of $X$ and is compact and connected.
\end{proof}

\section{Families with bounded multiplicity}

In this section we will consider the question of when an element of
odd K-theory can be represented by a family with uniform bounded
multiplicity.  To this end let, for $n \geq 1$, $\fub (n)$ denote the operators with
multiplicity less than or equal to $n$.  Then $\fub(n) \cont
\fub(n+1)$ and we set $\fub(\infty) = \bigcup \fub(n)$. We do the same
for $\F_{R}$. Throughout,  $X$ will be a compact space.

Recall that in Atiyah-Singer, \cite{Atiyah-Singer:6}, the following
diagram was studied.
\begin{equation}
  \begin{diagram}
 \node{\F^{sa}_{0}} \node{\hat \F}
    \arrow{e,t}{\exp(i\cdot)} \arrow{w} \node{\U(I+\K)}
    \node{U_{\infty}} \arrow{w}\\
     \node{} \node{\hat\F_{n}} \arrow{n} \arrow{e,t}{\exp (i\cdot)}
    \node{\U(I+F_{n})} \arrow{n} \node{U_{n}} \arrow{w} \arrow{n}
  \end{diagram}
\end{equation}
Here, $\F^{sa}_{0}$ is the bounded self-adjoint Fredholms with
essential spectrum on both sides of the origin, while $\hat \F$ is
those operators with norm 1
and essential spectrum $\pm 1$.  Also, $\U(I+F_{n})$ is the unitary
operators of the form $I+K$, with $K$ of rank $n$, $\hat \F_{n} $ is
the operators in $\hat\F$ with finitely many eigenvalues in
$(-1,1)$ and for which $\exp(iT) \in U_{n}$.  The unlabeled arrows are
inclusions.  
The
Atiyah-Singer result shows that the composition of the maps on the top
row and their appropriate homotopy inverses provide a homotopy
equivalence which we shall denote by $\hat \chi : \F_{0}^{sa}
\to U_{\infty}$.  There is an obvious inclusion map of $\fub$ into
$\F_{0}^{sa}$ and hence into $U_{\infty}$.

To study the question of bounded multiplicity we make the following definition.

\begin{definition}
  Let $K^{1}_{(\infty)}(X)$ be the subset of $K^{1}(X)$ consisting of
  classes $[\alpha]$, $\alpha:X \to \fub$, such that there is an $n$
  and an $\alpha' \simeq \alpha$ with $\alpha':X \to \fub(n)$.  Let
  $K^{0}_{(\infty)}(X)$ be defined in an analogous way using $\F^{R}$.
\end{definition}

Note that the homotopy between $\alpha$ and $\alpha'$ is allowed to run
through all of $\F_{0}^{sa}$.

\begin{prop}
\label{natural}
  $K^{*}_{(\infty)}(X)$ is a natural subgroup of $K^{*}(X)$.
\end{prop}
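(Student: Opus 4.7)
The plan is to establish two properties of $K^{*}_{(\infty)}(X)$: closure under the group operations inherited from $K^{*}(X)$, and stability under pullback along continuous maps $f : Y \to X$. I would treat the $K^{1}$ case explicitly; the argument for $K^{0}_{(\infty)}$ carries over after replacing $\fub$ by $\F_{R}$.

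Naturality is essentially immediate. Given $f : Y \to X$ and a class $[\alpha] \in K^{1}_{(\infty)}(X)$ with $\alpha : X \to \fub$ homotopic in $\F_{0}^{sa}$ to some $\alpha' : X \to \fub(n)$ via a homotopy $H : X \x [0,1] \to \F_{0}^{sa}$, composition with $f$ produces $H \circ (f \x \text{id}) : Y \x [0,1] \to \F_{0}^{sa}$, a homotopy between $\alpha \circ f$ and $\alpha' \circ f : Y \to \fub(n)$. Hence $f^{*}[\alpha]$ lies in $K^{1}_{(\infty)}(Y)$.

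For the subgroup property, I would realize the group law on $[X, \F_{0}^{sa}]$ concretely using direct sum of operators. Given families $\alpha, \beta : X \to \fub$ acting on $\h$, form $(\alpha \oplus \beta)(x) := \alpha(x) \oplus \beta(x)$ on $\h \oplus \h$ and transport back to $\h$ via a fixed unitary isomorphism $\h \oplus \h \iso \h$. Continuity in the topology of $\fub$ is preserved since $f((\alpha \oplus \beta)(x)) = f(\alpha(x)) \oplus f(\beta(x))$ is norm continuous for each $f \in C_{b}(\R)$, and the defining conditions of $\fub$ (closed self-adjoint, compact resolvent, infinitely many positive and negative eigenvalues) all pass through direct sums. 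The crucial multiplicity identity is $m(D_{1} \oplus D_{2}, \lambda) = m(D_{1}, \lambda) + m(D_{2}, \lambda)$, since the $\lambda$-eigenspace of $D_{1} \oplus D_{2}$ is $E_{\lambda}(D_{1}) \oplus E_{\lambda}(D_{2})$. Consequently $\alpha' : X \to \fub(n)$ and $\beta' : X \to \fub(m)$ imply $\alpha' \oplus \beta' : X \to \fub(n+m)$, establishing closure under addition. For inverses, $-D$ has the same spectrum negated with the same multiplicities, so $-\alpha' : X \to \fub(n)$ represents $-[\alpha]$. The trivial class is represented by a constant map to any fixed operator of simple spectrum, for instance a diagonal operator with eigenvalues $n + \tfrac{1}{2}$, $n \in \Z$, which lies in $\fub(1)$.

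The main obstacle is justifying that the direct-sum operation above genuinely implements the H-space addition on $\F_{0}^{sa}$ coming from the Atiyah-Singer identification with $U_{\infty}$. Under the map $T \mapsto \exp(i\pi T)$ of the preceding diagram, direct sum of self-adjoint Fredholms corresponds to the block-diagonal direct sum on $U_{\infty}$, which is the standard H-space structure there; since this H-space structure represents the $K^{1}$ addition, the concrete sum and the abstract sum agree on homotopy classes. Once this compatibility is noted, the three checks above combined with the first paragraph yield the proposition.
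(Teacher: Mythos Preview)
Your argument is correct and follows essentially the same route as the paper: naturality via pullback, addition realized by orthogonal direct sum (the paper phrases it as composition homotopic to direct sum), and inverses via $\alpha \mapsto -\alpha$. Your version is simply more detailed, spelling out the multiplicity identity $m(D_{1}\oplus D_{2},\lambda)=m(D_{1},\lambda)+m(D_{2},\lambda)$ and the compatibility with the $U_{\infty}$ H-space structure that the paper leaves implicit.
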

\begin{proof}
  This subgroup is clearly preserved by induced homomorphisms and contains the
  identity element of $K^{*}(X)$.  Addition in $K^{1}(X)$ is induced
  by composition of operators which is homotopic to orthogonal direct
  sum.  Thus, the sum of classes represented by bounded
  multiplicity elements  is also represented by a family of bounded
  multiplicity.  Moreover, since  the inverse of an element
  given by a family $\alpha : X \to \F_{0}^{sa}$ is represented by $-\alpha$,
  this operation preserves the property of
  having bounded multiplicity.  Thus, the result follows.
\end{proof}

\begin{prop}
  $K^{*}_{(\infty)}(X)$ is mapped to itself under Bott periodicity.
\label{periodicity}
\end{prop}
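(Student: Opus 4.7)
The plan is to realize complex Bott periodicity as external product with a finite-rank class, so that the bounded-multiplicity condition is preserved. Specifically, the Bott isomorphism $K^{\ast}(X) \to K^{\ast+2}(X)$ is implemented by external product with the Bott generator $b \in \tilde K^{0}(S^{2})$, and $b$ is represented by a finite-rank virtual bundle, namely the Hopf line minus a trivial line on $\mathbb{CP}^{1} \iso S^{2}$. Thus $b$ can itself be realized by a family $\beta : S^{2} \to \F_{R}(1)$ of operators whose nonzero eigenvalue has multiplicity one.

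Given such an explicit $\beta$, and given $\alpha : X \to \fub(n)$ (or $\F_{R}(n)$) representing an element of $K^{1}_{(\infty)}(X)$ (respectively $K^{0}_{(\infty)}(X)$), I would form the external product family $\alpha \boxtimes \beta : X \x S^{2} \to \fub$ (respectively $\F_{R}$) on $\h \otimes \C^{2}$ via the tensor-product construction $\alpha \otimes I + I \otimes \beta$, with a Clifford refinement if needed to stay in the self-adjoint component. Its multiplicities are generically at most $2n$; at isolated parameters where eigenvalues of the two factors combine to give an accidental degeneracy, a small perturbation within the ambient classifying space restores the uniform bound. Finally, under the K\"unneth-type splitting $K^{\ast}(X \x S^{2}) \iso K^{\ast}(X) \oplus K^{\ast-2}(X)$, the Bott component is extracted by subtracting off the trivial summand, which itself lies in $K^{\ast}_{(\infty)}(X)$ by Proposition~\ref{natural}, leaving a class in $K^{\ast-2}_{(\infty)}(X) \iso K^{\ast}_{(\infty)}(X)$.

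The main obstacle will be handling the potential accidental accumulation of multiplicity in the tensor-product construction, where distinct sums $\lambda_{i}(x) + \mu_{j}(y)$ can coincide on a positive-codimension locus in $X \x S^{2}$. One clean workaround is the Toeplitz realization of the Bott map, in which the operation amounts to a shift-type construction on a fixed finite-dimensional auxiliary factor; the multiplicities of the resulting family are then bounded by the original bound times the dimension of that factor, yielding the claim directly. Alternatively, one appeals to the universal property of Bott periodicity at the level of classifying spaces to reduce the statement to checking that the induced map factors through $\fub(cn)$ for a universal constant $c$ depending only on the model used.
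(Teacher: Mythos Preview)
Your approach and the paper's share the same starting point: realize Bott periodicity as external product with the Bott generator $b \in \tilde K^{0}(S^{2})$, which is represented by a finite-rank virtual bundle. The difference lies in how the product is implemented.

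The paper's argument is shorter. It realizes the product of $\{D_{x}\}$ with a class $[E] \in K^{0}(S^{2})$ by letting $D_{x}$ act as $D_{x} \otimes I$ on $\h$ tensored with the fibre of $E$. When $E$ is a trivial bundle of rank $d$, every eigenspace of $D_{x}$ is simply tensored with $\C^{d}$, so each multiplicity is multiplied by exactly $d$. For a non-trivial $E$, one embeds it as a summand of a trivial bundle and observes that compressing to the range of the fibrewise projection (which commutes with the family) can only lower multiplicities. No degeneracy analysis is needed.

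Your route via the operator-theoretic product $\alpha_{x} \otimes I + I \otimes \beta_{y}$ also works, but the worry about accidental degeneracies that drives your list of workarounds is unfounded. If $\beta_{y}$ acts on a $d$-dimensional space with eigenvalues $\mu_{1}(y), \ldots, \mu_{d}(y)$ (listed with multiplicity), then any eigenvalue $\nu$ of the sum satisfies $\nu = \lambda + \mu_{j}(y)$ for at most $d$ indices $j$, so its eigenspace has dimension at most $\sum_{j=1}^{d} m(\nu - \mu_{j}(y), \alpha_{x}) \leq dn$. Thus the bound $dn$ holds at \emph{every} parameter, not just generically; the perturbation argument, the Toeplitz model, and the appeal to a universal property are all unnecessary. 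In effect this observation collapses your argument into the paper's: the finite-dimensionality of the auxiliary factor is the only thing that matters.
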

\begin{proof}
  The Bott periodicity map is given by taking the product with the
  Bott element of $\tilde K^{0}(S^{2})$.  The product operation can be
  realized in the present context by letting each operator in the
  family act on the Hilbert space obtained by tensoring with the
  $L^{2}$ sections of the bundles representing the $K^{0}$ class.  If
  the bundle is trivial, then the multiplicity will by multiplied by
  its dimension.  If it is not trivial, then it is a summand of a
  trivial bundle and one can see that restricting to the image of the
  projection onto the sections of the bundle can only lower the
  multiplicity.  Note that in this setting, the operators in the family
  will commute with the projections onto the sections of the bundle.
\end{proof}

We will now make use of a construction which appears in a paper of
Mickelsson, \cite{Mickelsson:2006}.  It will be used to associate to a
map into the finite dimensional unitary group, $U_{n}$, an explicit
family of unbounded self-adjoint Fredholm operators with the
multiplicity of their spectrum uniformly bounded by $n$.

Let $U \in U_{n}$. Consider the operator
\begin{equation*}
  -i \frac{d}{dx} : C^{\infty}([0,1],\C^{n}) \to C^{\infty}([0,1],\C^{n})
\end{equation*}
with the boundary condition
\begin{equation*}
  \xi(1) = U \xi(0).
\end{equation*}

This yields a self-adjoint Fredholm operator on $L^{2}([0,1],\C^{n})$ which we will denote
$D_{U}$.  It is straightforward to compute the spectrum of $D_{U}$ and
the result is as follows.  Let $\{ z_{1},  \ldots, z_{n} \}$ be the
spectrum of $U$.  Let $\lambda_{j}$ satisfy $0 \leq \lambda_{j} < 1$
and $z_{j} = e^{2\pi i \lambda_{j}}$.  Then,
\begin{equation*}
  \spec(D_{U}) = \{ m + \lambda_{j}\ | \ m \in \Z, 1 \leq j \leq n \}.
\end{equation*}

The multiplicity of the eigenvalue $m + \lambda_{j}$ is the same as
that of the eigenvalue $z_{j}$ of $U$, and it follows that the multiplicity of the spectrum of $D_{U}$
is less than or equal to $n$.

Let $\mu_{n} : U_{n} \to \fub$ be defined by $\mu_{n}(U) = D_{U}$.  We
will refer to $\mu_{n}$ as the Mickelsson map.

\begin{prop}
\label{Mickelsson}
The Mickelsson map yields a map
\begin{equation}
  \mu : U_{\infty} = \bigcup_{n\geq 1} U_{n} \to \fub \to\F_{0}^{sa}  ,
\end{equation}
which induces an isomorphism on homotopy groups,
\begin{equation}
  \mu_{*} : \pi_{i}(U_{\infty}) \to \pi_{i}(\F_{0}^{sa}),
\end{equation}
for all $i$.
\end{prop}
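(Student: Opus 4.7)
The plan is to exploit the fact that both $U_\infty$ and $\F_0^{sa}$ are classifying spaces for odd $K$-theory (the former by Bott periodicity, the latter by the Atiyah--Singer theorem recalled in the introduction), so their homotopy groups vanish in even degrees and are infinite cyclic in each odd degree. Hence it suffices to show that $\mu_*$ carries a generator of $\pi_{2k-1}(U_\infty)$ to a generator of $\pi_{2k-1}(\F_0^{sa})$ for each $k \ge 1$. I would reduce the general odd-degree case to $k=1$ via Bott periodicity, and verify $k=1$ by a direct spectral-flow calculation.

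First I would check that $\mu_n : U_n \to \fub(n)$ is continuous. For $U, U' \in U_n$ close to each other, the resolvents $(i - D_U)^{-1}$ and $(i - D_{U'})^{-1}$ are explicit integral operators on $L^2([0,1], \C^n)$ whose kernels depend continuously on the boundary unitary, so $D_U$ varies continuously in the Riesz topology; composing with the bounded transform $\chi$ delivers continuity into $\F_0^{sa}$. I would also check compatibility with the stabilization $U_n \hookrightarrow U_{n+1}$ (mapping $U \mapsto U \oplus 1$, which corresponds to adjoining a trivial boundary condition on the extra summand), so that $\mu$ genuinely extends to $U_\infty$.

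For the degree-one computation I would take the standard generator of $\pi_1(U_\infty)$, represented by the loop $U(t) = e^{2\pi i t}$ in $U_1$, $t \in [0,1]$. Then $\spec(D_{U(t)}) = \{\,m + t : m \in \Z\,\}$ and as $t$ runs over $[0,1]$ every eigenvalue shifts up by exactly one, so the spectral flow of the family is $+1$. Since spectral flow realizes the identification $\pi_1(\F_0^{sa}) \iso \Z$, this shows $\mu_*$ is an isomorphism in degree one.

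For the higher odd degrees I would bootstrap using the naturality of $\mu$ together with Bott periodicity. Concretely, $\mu$ determines a natural transformation of representable functors $[\,\cdot\,, U_\infty] \Rightarrow [\,\cdot\,, \F_0^{sa}]$ on compact Hausdorff spaces, both of which compute $K^1$. Proposition~\ref{periodicity} ensures that this natural transformation is compatible with Bott periodicity on the subgroup $K^1_{(\infty)}$, so the degree-one isomorphism propagates via iterated Bott to an isomorphism on $K^1(S^{2k-1}) \iso \pi_{2k-1}$ for every $k \ge 1$. The main obstacle will be making this naturality-plus-periodicity argument fully rigorous: one must identify the natural transformation induced by $\mu$ with the identity on $K^1$ (as opposed to some automorphism of $K^1$), which amounts to verifying that the composite $\hat \chi \circ \mu : U_\infty \to U_\infty$ coming from the Atiyah--Singer diagram is homotopic to the canonical stabilization. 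This latter check ultimately reduces to the spectral-flow calculation already carried out, together with keeping careful track of the sign conventions in the $\exp(i\cdot)$ map of the Atiyah--Singer diagram.
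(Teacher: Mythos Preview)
Your approach is essentially the paper's: verify a base case and propagate by Bott periodicity, using that $\mu$ is compatible with the periodicity map. The paper's proof is a two-line sketch that invokes the computation in \cite{Mickelsson:2006} showing $\mu_*$ is an isomorphism on $\pi_3$, whereas you take $\pi_1$ as the base case via the spectral-flow calculation for the loop $t\mapsto e^{2\pi i t}$ in $U_1$. Both are valid anchors; yours is more self-contained, while the paper simply cites the $S^3$ result.

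Two remarks. First, your final worry about distinguishing ``the identity on $K^1$'' from ``some automorphism of $K^1$'' is unnecessary for the statement at hand: the proposition only asserts that $\mu_*$ is an isomorphism on each $\pi_i$, and once you know it is $\pm 1$ on $\pi_1$ and commutes with periodicity, you are done. Second, be careful that Proposition~\ref{periodicity} as stated says only that $K^*_{(\infty)}$ is closed under the Bott map; what you actually need is the slightly stronger assertion that the Mickelsson construction intertwines the external-product description of periodicity on both sides (so that the square with $\mu_*$ and the two Bott isomorphisms commutes). This is exactly what the paper asserts without proof in the phrase ``the Mickelsson map commutes with periodicity,'' and it is what you should spell out if you want more than a sketch.
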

\begin{proof} The first statement follows from the definitions while
  the second is a consequence of the facts that the Mickelsson map
  commutes with periodicity and the computation from \cite{Mickelsson:2006}
    that it is an isomorphism for $S^{3}$.
  
\end{proof}

Note that if $X$ is a compact space, then $\mu_{*} : [X,U_{\infty}] \to
[X,\F_{0}^{sa}]$ actually maps into $K^{1}_{\infty}(X)$.

These three propositions yield the following theorem.
\begin{thm} Let $X$ be a compact metric space.  Then one has
  $$K^{*}_{(\infty)}(X) = K^{*}(X). $$
\label{bounded}
\end{thm}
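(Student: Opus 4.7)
The inclusion $K^{*}_{(\infty)}(X)\subseteq K^{*}(X)$ holds by definition, so only the reverse requires proof. My plan is to settle the $K^{1}$ case first, using the Mickelsson construction as the main engine, and then deduce the $K^{0}$ case from the preservation properties already established.

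For $K^{1}(X)$: Proposition \ref{Mickelsson} says that $\mu\colon U_{\infty}\to\F^{sa}_{0}$ is a weak homotopy equivalence. Both source and target have the homotopy type of CW complexes---$U_{\infty}$ is the direct limit of the finite-dimensional CW complexes $U_{n}$, and $\F^{sa}_{0}$ inherits CW homotopy type through the Atiyah-Singer diagram recalled earlier, which expresses it up to homotopy in terms of $\U(I+\K)$. Whitehead's theorem therefore upgrades $\mu$ to an honest homotopy equivalence, so that $\mu_{*}\colon[X,U_{\infty}]\to[X,\F^{sa}_{0}]=K^{1}(X)$ is a bijection for every compact $X$. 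Given $c\in K^{1}(X)$, I would select $g\colon X\to U_{\infty}$ with $\mu\circ g$ representing $c$. Because $X$ is compact and $U_{\infty}$ carries the direct-limit topology, $g$ factors through some $U_{n}$ as a map $\tilde g\colon X\to U_{n}$. Then $\mu_{n}\circ\tilde g\colon X\to\fub(n)$ represents $c$ and has eigenvalue multiplicity at most $n$, showing $c\in K^{1}_{(\infty)}(X)$.

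For $K^{0}(X)$: one promotes the previous step via the suspension isomorphism $K^{0}(X)\cong\tilde K^{1}(SX)$ combined with the $K^{1}$ result applied to the compact space $SX$, or, equivalently, by running an analogous Mickelsson construction for $\F_{R}$ (the same boundary-value problem on $[0,1]$ applied to maps into $U_{n}$ produces a non-self-adjoint Fredholm family with multiplicity bounded by $n$). What makes this transfer legal is precisely Propositions \ref{natural} and \ref{periodicity}: the $(\infty)$ subgroup is natural under pullback and is preserved by the external product with the Bott element, so the suspension isomorphism restricts to a bijection between the $(\infty)$ subgroups. This forces $K^{0}_{(\infty)}(X)=K^{0}(X)$ from the $K^{1}$ identity applied at $SX$.

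The main obstacle I expect is the CW-homotopy-type step for $\F^{sa}_{0}$: Whitehead's theorem is what lets us pass from an isomorphism of homotopy groups to an honest homotopy equivalence, and thus to bijectivity of $[X,-]$ for arbitrary compact $X$ rather than just spheres. A secondary subtlety lies in the factoring-through-$U_{n}$ step, which implicitly uses that the homotopy between the original family $f$ and $\mu\circ g$ is allowed to run through all of $\F^{sa}_{0}$ (not merely through $\fub$); this is precisely the flexibility built into the definition of $K^{1}_{(\infty)}(X)$, and without it the argument would not close.
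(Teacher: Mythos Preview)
Your argument is correct and takes a genuinely different route from the paper's. The paper proceeds by first observing (via Propositions \ref{natural} and \ref{periodicity}) that $K^{*}_{(\infty)}$ is itself a cohomology theory with a six-term sequence, then compares the inclusion $K^{*}_{(\infty)}\hookrightarrow K^{*}$ by a cellular induction over the skeleta of a finite complex---reducing to spheres, where Proposition \ref{Mickelsson} applies directly---and finally passes to arbitrary compact metric spaces by writing them as inverse limits of finite complexes. You instead upgrade the weak equivalence of Proposition \ref{Mickelsson} to a genuine homotopy equivalence via Whitehead, and then use compactness of $X$ to factor any map $X\to U_{\infty}$ through some $U_{n}$, which immediately produces a bounded-multiplicity representative.

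What each approach buys: the paper's route treats $K^{0}$ and $K^{1}$ uniformly through the six-term sequence and never needs to invoke CW homotopy type for $\F^{sa}_{0}$, at the cost of the skeletal induction and the inverse-limit step. Your route is more direct for $K^{1}$ and handles all compact $X$ in one stroke, but leans on the CW type of $\F^{sa}_{0}$ (available, as you say, through the Atiyah--Singer equivalences recalled just before the theorem, or through Palais-type results on Banach manifolds) and leaves the $K^{0}$ case a bit sketched. Your appeal to suspension together with Propositions \ref{natural} and \ref{periodicity} is correct in spirit, but verifying that the suspension isomorphism $K^{0}(X)\cong\tilde K^{1}(SX)$ restricts to the $(\infty)$ subgroups requires tracking how that isomorphism is realized at the level of operator families; this is precisely the kind of bookkeeping the paper's cohomology-theory framing absorbs automatically.
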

\begin{proof}
  It follows from Propositions \ref{natural} and \ref{periodicity}
  that $K^{*}_{(\infty)}(X)$ defines a cohomology theory on compact
  spaces with a 6-term exact sequence of the same type as that for
  $K^{*}(X)$.  The inclusion induces a map of 6-term sequences.
  Assume first that $X$ is a finite complex.  Then applying the
  cohomology theories to the sequence of skeletons,
  \begin{equation}
    X^{(k)} \to X^{(k+1)} \to \bigvee S^{(k+1)},
  \end{equation}
will yield the result by induction once one knows that it holds for
spheres.  However, for spheres the Mickelsson map composed with the
inclusion,
\begin{equation}
\begin{diagram}
  \node{\pi_{i}(U_{\infty})} \arrow{e,t}{\mu_{*}} \node{ \pi_{i}(\F^{sa}_{0}(\infty))} \arrow{e,t}{i} \node{\pi_{i}(\F^{sa}_{0})}
\end{diagram}
\end{equation}
agrees with the isomorphism from Atiyah-Singer,
\cite{Atiyah-Singer:6}.  Here, $\F^{sa}_{0}(\infty)$ denotes the
subset of $\F^{sa}_{0}$ homotopic to regular operators of bounded
multiplicity.  By Proposition \ref{Mickelsson}, $\mu_{*}$
is an isomorphism on spheres, hence so is the inclusion, $i$.  This
proves the result for finite complexes.  By expressing a compact
metric space as an inverse limit of finite complexes one obtains the
desired conclusion.

\end{proof}

Note that the same argument shows that the Mickelsson map is an
isomorphism.

This result has connections to the paper of Nicolaescu, \cite{Nicolaescu:2007},
in which the relation of $K^{1}(X)$ and homotopy classes of maps into
the space of unbounded self-adjoint Fredholm operators with essential
spectrum $\{\pm 1\}$, but possibly having some continuous spectrum, is
addressed.  From the vantage of this paper  Theorem \ref{bounded} shows that
every element in $K^{1}(X)$ is represented by a family of regular
unbounded self-adjoint Fredholm operators.  

As a consequence of this fact, one sees that any family is homotopic to a
family with bounded multiplicity.  One can estimate the bound on the
multiplicity in a rough way using the dimension of $X$.  It would be
desirable to get a refined estimate based on the topology of $X$.

\begin{definition}
  Let $\dx$ be a family on $X$.  The  {\em minimal multiplicity} of the
  family is the least integer $n$ such that $\dx \cong \dx'$ where
  $\dx'$ is a family with multiplicity bounded by $n$.  
\end{definition}
\begin{prop}
  Let $[\alpha] \in K^{1}(X)$ and suppose the dimension of $X$ is $k$.
   Then $[\alpha]$ is represented by a family $\dx$  with {\em minimal
   multiplicity} $ <
   [\frac{k+1}{2}]$, where $[x]$ denotes the largest integer less than
   or equal to $x$.
\end{prop}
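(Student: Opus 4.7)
The plan is to identify $K^1(X)$ with $[X,U_\infty]$ via the Mickelsson construction (Proposition~\ref{Mickelsson} and Theorem~\ref{bounded}), then apply obstruction theory to homotope a representing map $f\colon X\to U_\infty$ into the finite-dimensional unitary group $U_n$ for the smallest $n$ allowed by $\dim X$, and finally compose with $\mu_n\colon U_n\to\fub(n)$ to produce a family with multiplicity bounded by $n$.

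The first step is to record the connectivity of the inclusion $U_n\hookrightarrow U_\infty$. The fibration $U_n\to U_{n+1}\to U_{n+1}/U_n=S^{2n+1}$ together with the long exact sequence of homotopy groups shows that $\pi_i(U_n)\to\pi_i(U_{n+1})$ is an isomorphism for $i\le 2n-1$ and a surjection for $i=2n$. Passing to the colimit, $U_n\hookrightarrow U_\infty$ is a $(2n)$-equivalence, and its homotopy fibre $F_n$ is $(2n-1)$-connected.

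The second step is the standard obstruction-theoretic lifting argument for the principal fibration $U_\infty\to U_\infty/U_n$. For a finite simplicial complex $X$ of dimension $k$, the successive obstructions to homotoping $f\colon X\to U_\infty$ into $U_n$ lie in the cohomology groups $H^{i+1}(X;\pi_i(F_n))$; these vanish for $i\le 2n-1$ because $F_n$ is $(2n-1)$-connected, and they vanish for $i\ge 2n$ as soon as $i+1>k$. Thus all obstructions vanish whenever $k\le 2n$, so it is enough to take $n=\lceil k/2\rceil=\lfloor (k+1)/2\rfloor$. Composing the resulting map $X\to U_n$ with the Mickelsson map $\mu_n$ yields a family in $\fub(n)$; by continuity of $\mu$, the homotopy inside $U_\infty$ is carried to a homotopy in $\F^{sa}_0$, so the new family represents the same K-theory class as $\alpha$ in the sense allowed by the definition of $K^*_{(\infty)}(X)$.

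The main technical point is the connectivity count for $U_n\hookrightarrow U_\infty$; once this is established, the obstruction-theoretic lift and the translation back to families via the Mickelsson map are both formal. As sketched, the argument yields minimal multiplicity at most $\lfloor (k+1)/2\rfloor$; extracting the strict inequality stated would require a finer analysis of the top-degree obstruction, or a replacement of $U_n$ by a proper subspace still mapping into $\fub(n)$ whose inclusion into $U_\infty$ has one additional unit of connectivity.
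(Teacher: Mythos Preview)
Your argument is essentially the same as the paper's: both start from a representative $X\to U_N$ (via Theorem~\ref{bounded}), use the fibrations $U_{n-1}\to U_n\to S^{2n-1}$ to compress into $U_n$ with $n=\lfloor(k+1)/2\rfloor$, and then apply the Mickelsson map. Your observation about the strict inequality is well taken; the paper's own proof only produces a representative with multiplicity bounded by $\lfloor(k+1)/2\rfloor$, so the ``$<$'' in the statement appears to be a slip for ``$\le$''.
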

\begin{proof}
   Suppose that $\alpha : X \to U_{N}$ is given.  Using the fibrations $U_{n-1}
   \to U_{n} \to S^{2n-1}$ one can inductively reduce the dimension of
   the unitary group to the least possible, which is
   $[\frac{k+1}{2}]$.  The result follows upon applying Theorem~\ref{bounded}. 
\end{proof}

\section{Multiplicity $\leq$ 2}

As a sample of how conditions on the multiplicity beyond assuming
constancy can be used to study the K-theory class of a family, we will
consider the case when the multiplicity is less than or equal to 2.
We will also assume that the spectral flow of the family is zero.  The
main result of this section is that, if we assume that the space $X$
has no torsion in cohomology, then such a family is trivial in
K-theory if a certain 3-dimensional cohomology class vanishes.  Thus, the
index gerbe will be zero also.

Let us assume that we have a family with $\mult(\{D_x\}) \leq 2$  and
recall the standing assumption that the parameter space $X$ is a
connected finite simplicial complex.  Assume $\spf (\dx) = 0$ and let
$\{\mu_{n}\}$ be an exhaustion.  Our procedure will be to 
deform the family inductively over
k-skeletons for increasing $k$,  
so that the exhaustion for the deformed family, $\{\tilde
\mu_{n}\}$, has $\tilde \mu_{0}(x) < \tilde \mu_{1}(x)$ for all $x$.  The
triviality will then follow from Proposition~\ref{trivial}.  

Let $C_{i,i+1} = \{\ x\ |\ \mu_{i}(x) = \mu_{i+1}(x) \}$, for any $i
\in \Z$.  Since $\mult(\dx) \leq 2$ we have $C_{-1,0},\  C_{0,1}$ and
$C_{1,2}$ disjoint closed sets. Let $W_{i,i+1}, \ i= -1, 0, 1$, be disjoint
open neighborhoods of $C_{i,i+1}$.  We assume the triangulation of $X$
so fine that any closed simplex which meets $C_{i,i+1}$ is contained in
$W_{i,i+1}$.  Thus, there are a finite number of simplices,
$\sigma_{l}$, such that
\begin{equation*}
  C_{i,i+1} \cont \interior (\bigcup_{1}^{n}\sigma_{l}) \cont \bigcup_{1}^{n}\sigma_{l} \cont W_{i,i+1}.
\end{equation*}

Our procedure for deforming a family involves successive application
of certain types of ``moves''.  The first is a preliminary flattening
process which allows one to control the geometry of the sets over
which the family has eigenvalues of multiplicity 2.  
We will state things for $C_{0,1}$ to simplify notation, but all
results hold for $C_{i,i+1}$ with the appropriate modifications. 

\begin{prop}[Flattening]
  \label{flattening}
  
  Let $K$ be a closed subset of $C_{0,1}$  and let $W_{1}, W_{2}$ be 
  open sets with compact closures satisfying $K \cont W_{1} \cont \bar
  W_{1} \cont W_{2} \cont \bar W_{2} \cont W_{0,1}$.  Assume further
  that $K = C_{0,1} \cap W_{2}$.   
  Then there exists a family $\{\tilde D_x\}$ with
  associated exhaustion $\{\tilde \mu_{n} \}$, which  satisfies
  \begin{itemize}
  \item[i)] $K \cont W_{2} \cap \tilde C_{0,1} = \bar W_{1}$, where $\tilde C_{0,1} = \{\
    x\ |\ \tilde \mu_{0}(x) =
    \tilde \mu_{1}(x) \}$,
  \item [ii)] $\tilde D_{x} = D_{x}$ for $x \in X \minus W_{2}$, and
  \item [iii)] $\{\tilde D_x\} \simeq \{ D_x\} $. 
  \end{itemize}

\end{prop}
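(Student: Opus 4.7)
My plan is to produce $\{\tilde D_x\}$ by a self-adjoint finite-rank perturbation of $\{D_x\}$, supported inside the two-dimensional spectral subspace spanned by eigenvectors for $\mu_0(x)$ and $\mu_1(x)$, and modulated by a Urysohn function on $X$. The cutoff will be chosen so the modification lives in $W_2$ and forces the two eigenvalues to merge precisely on $\bar W_1$.

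First I would construct a norm-continuous rank-two spectral projection $Q(x)$ defined in a neighborhood of $\bar W_2$. Because the $W_{i,i+1}$ are pairwise disjoint and $\bar W_2 \cont W_{0,1}$, the exhaustion functions obey $\mu_{-1}(x) < \mu_0(x) \leq \mu_1(x) < \mu_2(x)$ strictly on $\bar W_2$; compactness then supplies continuous $a(x), b(x)$ separating the pair $\{\mu_0,\mu_1\}$ from $\mu_{-1}$ below and $\mu_2$ above. A continuously varying rectangular contour $\gamma(x) \subset \C$ enclosing $[a(x),b(x)]$ yields
\begin{equation*}
Q(x) = \frac{1}{2\pi i}\oint_{\gamma(x)} \frac{dz}{z - D_x},
\end{equation*}
a norm-continuous self-adjoint projection of rank two onto the span of the eigenvectors for $\mu_0(x)$ and $\mu_1(x)$.

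Next I would pick a continuous cutoff $\phi : X \to [0,1]$ with $\phi^{-1}(1) = \bar W_1$ and $\phi \equiv 0$ on $X \minus W_2$; in a metric simplicial complex one can take $\phi(x) = d(x, X \minus W_2)/\bigl(d(x, X \minus W_2) + d(x, \bar W_1)\bigr)$. Setting $c(x) = \tfrac{1}{2}(\mu_0(x) + \mu_1(x))$, define
\begin{equation*}
\tilde D_x \;=\; D_x \;-\; \phi(x)\bigl(D_x - c(x)I\bigr)Q(x),
\end{equation*}
extended by $\tilde D_x = D_x$ where $\phi$ vanishes. The perturbation is self-adjoint and of rank at most two, so $\tilde D_x \in \fub$; it vanishes outside $W_2$, giving (ii); and the straight-line homotopy $D_x^{(t)} = D_x - t\phi(x)(D_x - c(x)I)Q(x)$ connects $D_x$ to $\tilde D_x$ inside $\fub$, giving (iii).

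For (i), note that $\tilde D_x$ commutes with $Q(x)$, so its spectrum splits. On $\ker Q(x)$ it coincides with $D_x$, preserving the eigenvalues $\mu_n(x)$ for $n \notin \{0,1\}$; on $\im Q(x)$ the operator is $(1-\phi(x))D_x Q(x) + \phi(x) c(x) Q(x)$, whose two eigenvalues are $\mu_0(x) + \tfrac{\phi(x)}{2}(\mu_1(x)-\mu_0(x))$ and $\mu_1(x) - \tfrac{\phi(x)}{2}(\mu_1(x)-\mu_0(x))$. These lie strictly between $\mu_{-1}(x)$ and $\mu_2(x)$, so the new enumeration has $\tilde \mu_n = \mu_n$ for $n \notin \{0,1\}$ and the modified pair as $\tilde \mu_0, \tilde \mu_1$. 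They coincide exactly when $\phi(x) = 1$ or $\mu_0(x) = \mu_1(x)$, i.e.\ on $\bar W_1 \cup K$; intersecting with $W_2$ and using $K \cont W_1 \cont \bar W_1$ gives $\tilde C_{0,1} \cap W_2 = \bar W_1$, and $K \cont \bar W_1$ completes (i). The main obstacle is the global norm continuity of $Q(x)$ across points where the eigenvalue pair collides; this is handled by the uniform spectral gap to $\mu_{-1}, \mu_2$ on $\bar W_2 \cont W_{0,1}$, after which every remaining verification is a direct computation in the $2\x 2$ block.
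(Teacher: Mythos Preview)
Your proof is correct and reaches the same conclusion as the paper, but by a genuinely different construction. The paper works via functional calculus: it defines a piecewise-linear function $h_x:\R\to\R$ equal to the identity outside $(\mu_{-1}(x),\mu_1(x))$ and sending $\mu_0(x)$ to $\mu_1(x)$, and then sets $\tilde D_{x,t} = D_x + t\phi(x)\bigl(h_x(D_x)-D_x\bigr)$. Thus the paper pushes $\mu_0$ \emph{up} to $\mu_1$ while leaving every other eigenvalue fixed. You instead build the rank-two spectral projection $Q(x)$ explicitly via a contour integral and pull both $\mu_0$ and $\mu_1$ \emph{toward their midpoint} $c(x)$. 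Your route has the advantage of making the finite-rank nature of the perturbation and the resulting $2\times 2$ eigenvalue formulas completely transparent; the paper's functional-calculus formulation avoids constructing $Q(x)$ and handles continuity at points of $C_{0,1}$ automatically, since $h_x(D_x)-D_x$ vanishes there without any appeal to a spectral gap. One minor consequence of the difference: the paper notes after the next proposition that all of its deformations are monotone in the sense $\mu_i(x)\le\tilde\mu_i(x)$; your symmetric averaging loses this for $\tilde\mu_1$, though the statement of the flattening lemma itself does not require it.
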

\begin{proof}
   Let $\phi: X \to [0,1]$ be a function satisfying
  \begin{equation*}
    \phi(x) = \begin{cases}
         0& \text{for}\ \  x \in X \minus W_{2}\\
         1& \text{for}\ \  x \in \bar W_{1}
              \end{cases}     
  \end{equation*}

Define
\begin{equation*}
  \tilde D_{x,t} = D_{x} + t\phi(x)(h_{x}(D_{x}) - D_{x},
\end{equation*}
where $h_{x}: \R \to \C$ is a continuous function satisfying
\begin{equation*}
    h_{x}(t) = \begin{cases}
         t& \text{for}\ \  t \leq \mu_{-1}(x)\ \  \text{or}\ \ t \geq
         \mu_{1}(x) \\
         
         \mu_{1}(x)& \text{for}\ \  \mu_{0}(x) \leq t \leq \mu_{1}(x) \\ 
         \lambda_{x}(t)& \text{for}\ \  \mu_{-1}(x) \leq t \leq \mu_{0}(x) \\
          \end{cases},     
  \end{equation*}
where $\lambda_{x}(t)$ is the linear function with graph connecting
$(\mu_{-1}(x), \mu_{-1}(x))$ to $(\mu_{0}(x), \mu_{1}(x))$

Letting $\tilde D_{x} = \tilde D_{x,1}$, with associated exhaustion
$\tilde \mu_{n}(x)$, one checks that $K \cont W_{2} \cap \tilde C_{0,1} = \bar W_{1}$
and that the conclusions of the proposition hold for the family
$\{ \tilde D_{x} \}$.
\end{proof}

Thus, the preceeding deformation allows one to determine the set precisely,
($\bar W_{1}$ above), on which multiplicity of $(x, \tilde \mu_{0}(x))$
is 2.  Next, we will modify the family over neighborhoods of these
sets.

Let $X^{(0)}$ be the $0$-skeleton of the
parameter space $X$.  The first step will be to deform the family
$\{D_x\}$ on a neighborhood of $X^{(0)}$.
\begin{prop}
\label{vertices}
  Let $y \in X^{(0)}$.  Then there exists  a contractible neighborhood $V$  with $V
  \cap X^{(0)} = \{y\}$ and a family $\{\tilde D_x\}$ satisfying
  \begin{itemize}
  \item [i)] $\tilde D_{x} = D_{x}$ for $x \in X \minus V$,
  \item [ii)]$\{\tilde D_{x} \} \simeq \{ D_{x} \}$, and
  \item [iii)] There is a neighborhood $W$ of $y$ such that $\tilde \mu_{0}(x) < \tilde \mu_{1}(x)$ for $x \in W \cont \bar W \cont V$.
  \end{itemize}
\end{prop}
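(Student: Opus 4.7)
The plan is to split the proof according to whether $\mu_0(y) < \mu_1(y)$ or $\mu_0(y) = \mu_1(y)$. If the strict inequality holds at $y$, then by continuity of the exhaustion functions (Proposition~\ref{standard}) it persists on some open neighborhood $W$ of $y$; take any contractible $V$ with $W \cont V$ and $V \cap X^{(0)} = \{y\}$ and set $\tilde D_x = D_x$. All three conclusions are then immediate.

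For the substantive case, assume $\mu_0(y) = \mu_1(y) = \lambda$. Since $\mult(\dx) \leq 2$, this is an eigenvalue of $D_y$ of exact multiplicity two. Using Proposition~\ref{standard} pick a canonical neighborhood $U = V \x (\lambda-\delta,\lambda+\delta)$ of $(y,\lambda)$, choosing $V$ contractible with $V \cap X^{(0)} = \{y\}$. Because the sum of multiplicities of eigenvalues in the interval is constantly $2$ on $V$ and the endpoints carry no spectrum, the rank-two spectral projection
\begin{equation*}
  P(x) = \frac{1}{2\pi i} \oint_{|z-\lambda| = \delta} (z-D_x)^{-1}\,dz
\end{equation*}
is norm continuous on $V$ (Proposition~\ref{multiplicity}). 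Fix a unit vector $v \in P(y)\h$, set $B = vv^*$, and pick $\phi : X \to [0,1]$ supported in $V$ with $\phi \equiv 1$ on a smaller open neighborhood $W$ of $y$ satisfying $\bar W \cont V$. Define
\begin{equation*}
  \tilde D_{x,s} = D_x + s\,\phi(x)\,P(x) B P(x), \qquad s \in [0,1],
\end{equation*}
and let $\tilde D_x = \tilde D_{x,1}$. Each $\tilde D_{x,s}$ is a self-adjoint finite-rank perturbation of $D_x$, so it lies in $\fub$ and varies continuously in $(x,s)$; this yields (i) and (ii).

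For (iii), contractibility of $V$ trivializes the rank-two bundle $x \mapsto P(x)\h$ by a continuous orthonormal frame $\{e_1(x), e_2(x)\}$. In this frame the restriction of $\tilde D_x$ to $P(x)\h$ is a continuous $2 \x 2$ self-adjoint matrix $\tilde M(x)$. At $x = y$, $D_y|_{P(y)\h} = \lambda I_2$ and the matrix of $vv^*|_{P(y)\h}$ has eigenvalues $0$ and $1$, so $\spec \tilde M(y) = \{\lambda, \lambda+1\}$. Eigenvalues of a continuously varying $2 \x 2$ self-adjoint matrix move continuously, so after shrinking $W$ the two eigenvalues of $\tilde M(x)$ remain distinct on $W$.

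The main obstacle I anticipate is the global bookkeeping: one must ensure that the two eigenvalues produced by $\tilde M(x)$ really are $\tilde \mu_0(x)$ and $\tilde \mu_1(x)$ in the exhaustion of $\tilde D_x$, rather than being displaced in the enumeration by eigenvalues originally indexed $\mu_{-1}$ or $\mu_2$ drifting into the window. This is resolved by first applying the flattening move of Proposition~\ref{flattening} to separate $\mu_{-1}$ and $\mu_2$ cleanly from $\lambda$ over a neighborhood of $y$, and then choosing $\delta$ small enough that the perturbed pair produced by $P(x)$ remains isolated in $(\lambda-\delta,\lambda+\delta)$; the canonical neighborhood structure then forces the labeling to agree.
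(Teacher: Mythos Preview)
Your overall strategy matches the paper's: case-split on whether $\mu_0(y)<\mu_1(y)$, and in the coincidence case perturb by a rank-one operator supported in the two-dimensional eigenspace. The paper arranges the details a little differently: it applies the flattening move \emph{first}, so that $\mu_0(x)=\mu_1(x)$ holds on an entire contractible neighborhood $V$; it then trivializes the resulting rank-two eigenbundle by a continuous orthonormal frame $\{\sigma_0,\sigma_1\}$ and perturbs by $\alpha(t,x)\,P_{\sigma_1(x)}$, with the scalar $\alpha$ chosen so that the shifted eigenvalue lands strictly between $\mu_1(x)$ and $\mu_2(x)$. Your variant---compressing a \emph{fixed} rank-one projection $B=vv^*$ (with $v$ chosen once, at $y$) through the moving spectral projection $P(x)$---is a legitimate alternative, but as written it has a gap.

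The problem is the size of the perturbation. At $x=y$ you have $P(y)BP(y)=vv^*$ of norm $1$, so the two eigenvalues of $\tilde D_y$ on $P(y)\h$ are $\lambda$ and $\lambda+1$. Nothing guarantees $\lambda+1<\mu_2(y)$, so $\tilde\mu_1(y)$ may overshoot $\mu_2(y)$ and the enumeration is scrambled. Your proposed cure---flatten, then take $\delta$ small---does not help: Proposition~\ref{flattening} \emph{enlarges} the coincidence set $C_{0,1}$ (it pushes $\mu_0$ up to $\mu_1$) and does nothing to widen the gap to $\mu_2$; and no choice of $\delta$ can confine a perturbation of fixed magnitude $1$ to $(\lambda-\delta,\lambda+\delta)$ while also excluding $\mu_2(y)$ unless $\mu_2(y)-\lambda>1$. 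The fix is simply to scale the perturbation, replacing $B$ by $\epsilon B$ with $0<\epsilon<\min_{x\in\overline W}\bigl(\mu_2(x)-\mu_1(x)\bigr)$; this is exactly the role played by the paper's coefficient $\alpha$. With that one adjustment your argument goes through and is essentially equivalent to the paper's, the only residual difference being that the paper's continuous frame yields a rank-one perturbation that is an honest projection at every $x$, whereas your $P(x)BP(x)$ is only approximately so away from $y$---a harmless distinction for the conclusion.
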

\begin{proof}
  If $\mu_{0}(y) < \mu_{1}(y)$ then this uniquely will hold in a neighborhood of
  $y$ and the original family will satisfy conditions (i)-(iii).  If,
  on the other hand, $y \in C_{0,1}$, so that $\mu_{-1}(y) < \mu_{0}(y) = \mu_{1}(y) < \mu_{2}(y)$,
  then we apply Proposition \ref{flattening} to obtain a 
  contractible neigborhood with compact closure, 
  $V$, of $y$ on which $\mu_{0}(x) = \mu_{1}(x)$ for $x \in V$.    

Let $\E = \{(x,v) \in V \x \h\
|\ v \ \text{is in the span of the eigenvectors for}\ \ \mu_{0}(x)\ \text{and} \ \mu_{1}(x)
\}$.  Then $\E \to X$ is a 2-dimensional vector bundle on some,
possibly smaller, neighborhood of $y$ which we continue to call $V$.
Since $V$ is contractible, the bundle is trivial.
Thus, there exists a framing $\{ \sigma_{0},
  \sigma_{1}\}$, where $\sigma_{j}(x)$ is an eigenvector for
  $\mu_{j}(x)$, for $j=0,1$.  Shrink $V$ to get $W \cont \bar W \cont V$
  and, using a bump function $ \phi$,  we  extend $\sigma_{i}$
  to all of $X$.  Let $\alpha(t,x) = t
  \frac{\mu_{1}(x)+\mu_{2}(x)}{2}$, and set $\tilde D_{x,t} = D_{x} +\alpha(t,x)
  P_{\sigma_{1}(x)}$, where $P_{\sigma_{1}(x)}$ is the orthogonal
  projection onto the subspace spanned by $\sigma_{1}(x)$. The family $\{\tilde D_{x,1}\}$
  satisfies the requirements of the proposition.

We repeat this construction, with the obvious modifications, for
vertices in $C_{-1,0}$.  Alternatively, one may observe that it is
possible to apply this method to the vertices in both $C_{-1,0}$ and
$C_{0,1}$ simultaneously.
\end{proof}

Thus, we have deformed our family so that $\mu_{0}(x) < \mu_{1}(x)$ on
a neighborhood of the 0-skeleton.  We will now proceed inductively to
extend this separation to all of $X$. 

It is worth noting that the previous deformations and all future ones
have the property that they are monotone, in the sense that
$\mu_{i}(x) \leq \tilde \mu_{i}(x)$ for all $i$ and $x \in X$.

From now on, to simplify notation, we will 
drop the tilde  and rename the  family resulting from a deformation as $\{D_{x}\}$.  

Assume that our family has the property that $\mu_{0}(x) < \mu_{1}(x)$
on a neighborhood, $W$, of the (k-1)-skeleton.  Let $\C^{2}\to \E \to
W$ be the 2-dimensional vector bundle whose fiber over the point $x$
is the span of the eigenspaces for $\mu_{0}(x)$ and $\mu_{1}(x)$.  For
$x \in W$, define $\sigma_{0}(x) $ to be the orthogonal projection
onto the eigenspace for $\mu_{0}(x)$.  This defines a continuous field
of projections over $W$. The next result shows that one can obtain the
desired deformation of $\dx$ if the field $\sigma_{0}$ can be extended
to the k-skeleton.  We will construct  the deformation simplex by simplex.

\begin{prop}
  \label{edges}
Let $\dx$ be a family of operators with $\mu_{0}(x) < \mu_{1}(x)$ on a
neighborhood, $W$, of the (k-1)-skeleton.  Let $\Delta^{k}$ be a
k-simplex.  Let $\ '\Delta^{k}$ be a smaller k-simplex with $C_{0,1}\cap
\Delta^{k} \cont \interior('\Delta^{k}) \cont\  '\Delta^{k} \cont
\interior(\Delta^{k})$.  If $\sigma_{0}|_{\partial ('\Delta ^{k})}$
extends to a field of projections onto the eigenspaces for
$\mu_{0}(x)$ over $\Delta^{k}$, then there are neighborhoods, $V$ and
$W'$, with 
$X^{(k-1)} \cup \Delta^{k} \cont W' \cont \bar W' \cont V$ and a family $\{\tilde  D_x\}$ satisfying

\begin{itemize}
  \item [i)] $\tilde D_{x} = D_{x}$ for $x \in X \minus V$,
  \item [ii)] $\{\tilde D_{x} \} \simeq \{ D_{x} \}$, and
  \item [iii)] $\tilde \mu_{0}(x) < \tilde \mu_{1}(x)$ for $x \in W' $.

\end{itemize}
\end{prop}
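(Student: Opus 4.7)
My plan is to combine the flattening move of Proposition~\ref{flattening} with the eigenspace perturbation technique from Proposition~\ref{vertices}, using the hypothesized extension of $\sigma_0$ as the framing data over the $k$-simplex.

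First, I would apply Proposition~\ref{flattening} inside a small open set $W_{2}$ with $C_{0,1}\cap\Delta^{k}\cont \bar W_{2}\cont \text{interior}(\Delta^{k})$, and with $W_{1}$ chosen so that $C_{0,1}\cap\Delta^{k}\cont W_{1}\cont \bar W_{1}\cont \text{interior}('\Delta^{k})$. The resulting family, which I continue to call $\dx$, satisfies $C_{0,1}\cap \bar W_{2}=\bar W_{1}$, is homotopic to the original, and is unchanged on $X\minus \bar W_{2}$, so in particular the separation $\mu_{0}<\mu_{1}$ on $W$ is preserved. This localizes the multiplicity-$2$ locus inside $\Delta^{k}$ to a closed set $\bar W_{1}$ sitting in the interior of $'\Delta^{k}$.

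Second, choose an open neighborhood $V$ of $\Delta^{k}$ inside the region where the canonical rank-$2$ vector bundle $\E\to V$ with fibre $\ker(D_{x}-\mu_{0}(x))\oplus \ker(D_{x}-\mu_{1}(x))$ is defined via Proposition~\ref{standard2}. The hypothesized extension of $\sigma_{0}|_{\partial('\Delta^{k})}$ to $\Delta^{k}$ agrees, on $\Delta^{k}\minus \bar W_{1}$, with the genuine $\mu_{0}$-eigenspace projection (uniquely determined there because $\mu_{0}<\mu_{1}$), so it glues with the genuine $\sigma_{0}$ on $V\minus \bar W_{1}$ to give a rank-$1$ sub-line bundle $L\cont\E$ on $\Delta^{k}\cup (V\minus \bar W_{1})$. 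Extend $L$ to all of $V$ using a retraction of a tubular neighborhood of $\bar W_{1}$ onto $\bar W_{1}$, which exists because $\bar W_{1}$ is contained in the interior of a single simplex. Denote the resulting rank-$1$ projection by $\sigma_{0}$ and set $\sigma_{1}=P_{\E}-\sigma_{0}$.

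Third, pick $W'$ with $X^{(k-1)}\cup \Delta^{k}\cont W'\cont \bar W'\cont V$, a bump function $\phi:X\to [0,1]$ equal to $1$ on $\bar W'$ and supported in $V$, and a constant $c>0$ larger than $\sup_{x\in \bar V}(\mu_{2}(x)-\mu_{0}(x))$. Define the homotopy
\[
\tilde D_{x,t}=D_{x}+t\,c\,\phi(x)\,\sigma_{1}(x),\qquad 0\le t\le 1,
\]
and set $\tilde D_{x}=\tilde D_{x,1}$. The perturbation commutes with $D_{x}$ on $V$: on $\bar W_{1}$ because $D_{x}|_{\E_{x}}=\mu_{0}(x) I$ acts as a scalar, and on $V\minus \bar W_{1}$ because $L$ is the genuine $\mu_{0}$-eigenspace and $\sigma_{1}$ projects onto the genuine $\mu_{1}$-eigenspace. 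A direct diagonalization on each $\E_{x}$ then shows that $L$ remains an eigenspace with eigenvalue $\mu_{0}(x)$ while the eigenvalue on $L^{\perp}\cap \E_{x}$ is pushed above $\mu_{2}(x)$ wherever $\phi(x)=1$, so $\tilde \mu_{0}(x)<\tilde \mu_{1}(x)$ throughout $\bar W'$. Conditions (i)--(iii) follow immediately, and compatibility with the separation already achieved on $W$ holds because on $W\cap V$ the projection $\sigma_{0}$ is the genuine $\mu_{0}$-eigenspace.

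The main obstacle is Step~2 — extending the rank-$1$ sub-bundle $L$ from $\Delta^{k}$ to an open neighborhood in a way that is simultaneously compatible with the hypothesized extension on $\Delta^{k}$ and with the genuine $\sigma_{0}$ on $W$ and on $V\minus \bar W_{1}$. The flattening step is what makes this possible: by arranging the multiplicity-$2$ locus $\bar W_{1}$ to sit in the interior of $'\Delta^{k}$, a tubular neighborhood retraction supplies the extension over $\bar W_{1}$, while outside $\bar W_{1}$ the genuine $\mu_{0}$-eigenspace provides a canonical continuation that automatically agrees with the hypothesis on the overlap.
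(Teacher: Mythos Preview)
Your proof is correct and follows the same underlying idea as the paper: perturb the family by a rank-one projection built from the hypothesized extension of $\sigma_{0}$, so as to split the $\mu_{0}$- and $\mu_{1}$-eigenvalues over the $k$-simplex. The differences are organizational rather than substantive. The paper's proof is a one-liner: it takes a bump function $\phi$ and sets $\tilde D_{x,t}=D_{x}+t\,\phi(x)\,\tfrac{\mu_{1}(x)+\mu_{2}(x)}{2}\,\sigma_{0}(x)$, perturbing directly along $\sigma_{0}$. You instead perturb along the complementary projection $\sigma_{1}=P_{\E}-\sigma_{0}$ with a constant coefficient $c$; either choice works, since both commute with $D_{x}$ on $\E_{x}$. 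You also insert a preliminary flattening (your Step~1) inside the proof, whereas the paper relegates flattening to the surrounding discussion and treats the proposition as already set up so that $\sigma_{0}(x)$ lands in the $\mu_{0}(x)$-eigenspace for every $x\in\Delta^{k}$. Under that reading of the hypothesis, commutation of $\sigma_{0}$ (and hence $\sigma_{1}$) with $D_{x}$ is immediate, so your flattening step is not needed for the argument to go through---nor is the tubular-neighborhood retraction in your Step~2, since $\bar W_{1}\subset\Delta^{k}$ already gives $\Delta^{k}\cup(V\setminus\bar W_{1})=V$. What your more explicit route buys is a clearer justification of the commutation used implicitly in the paper's ``straightforward to check.''
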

\begin{proof}

Let $\phi(x)$ be a bump function which is 1 on $W'$ and 0 on
$X\setminus V$.  Let $\tilde D_{x,t} = D_{x} +
t\phi(x)(\frac{\mu_{1}(x) + \mu_{2}(x)}{2})\sigma_{0}(x)$.  Then it is
straightforward to check that the family $D_{x,1}$ satisfies
conditions (i) -- (iii).  
\end{proof}

In order to use this construction to deform a family which is
separated over a neighborhood of $X^{(k-1)}$,
we apply Proposition~\ref{flattening}.  For this, note that $C_{0,1}$
is contained in the interior of k-simplices.  Consider one such
simplex and find a sub-simplex with parallel sides which contains its
intersection with $C_{0,1}$ in its interior.  We may assume that the
boundary of the smaller simplex is contained in the open set over
which $\mu_{0}(x) < \mu_{1}(x)$.  Next one applies the flattening
lemma to obtain a new family which has the smaller simplex as exactly
$C_{0,1} \cap \Delta^{k}$.  The projection field is defined on $\partial ('\Delta^{k})$.
Thus, if one can alway extend these projection fields from the boundary
of a k-simplex to the interior, then we can accomplish the deformation
of the family to one for which $\mu_{0}(x) < \mu_{1}(x)$ over a
neighborhood of the k-skeleton.  

Note that, since $\Delta^{k}$ is
contractible, the bundle $\E \to X$ is trivial over $\Delta^{k}$.
Thus, the existence of the extension is equivalent to the map $\sigma_{0}:
\partial \Delta^{k} \to Gr_{2}(\C^{2})$ being null-homotopic.  Here
$Gr_{2}(\C^{2})$ is the Grassmannian of lines in $\C^{2}$, which is
homeomorphic to $S^{2}$.  Since $\pi_{0}(S^{2}) = \pi_{1}(S^{2}) = 0$,
one can always obtain a deformed family for which $\mu_{0}(x) <
\mu_{1}(x)$ on a neighborhood of the 2-skeleton, $X^{(2)}$.  However,
since $\pi_{2}(S^{2}) = \Z$, it is not clear that one can proceed.  We
shall address this in the application below.


Putting these facts together we briefly present a sample of the type of result
obtainable using these methods.  Note that the result below makes the
case that the multiplicity of eigenvalues has a strong effect on the
classification of families of operators.
\begin{thm}
\label{result}
  Let $\dx$ be a family on $S^{2n+1}$ with $n \geq 2$.  If the
  multiplicity is less than or equal to 2, then the family is
  rationally trivial
  in K-theory.
\end{thm}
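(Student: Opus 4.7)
The plan is to combine the cellular obstruction theory set up earlier in this section with Serre's finiteness results for $\pi_*(S^2)$. First, since $S^{2n+1}$ is simply-connected for $n \ge 1$, one has $\check H^1(S^{2n+1};\Z) = 0$, so $\spf(\dx) = 0$ automatically, and a global spectral exhaustion $\{\mu_n\}$ exists by the main theorem of the preceding section. Thus all hypotheses of the inductive deformation scheme are in place.

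Fix a triangulation of $S^{2n+1}$ fine enough for Propositions~\ref{flattening}, \ref{vertices}, and \ref{edges}, and deform the family skeleton by skeleton so that $\mu_0(x) < \mu_1(x)$ on a neighborhood of the $k$-skeleton for successive $k$. The discussion preceding the theorem identifies the obstruction on each $k$-cell $\Delta^k$ with the null-homotopy of $\sigma_0 : \partial\Delta^k \to Gr_2(\C^2) \iso S^2$, i.e., with a class in $\pi_{k-1}(S^2)$; standard cellular obstruction theory assembles these into primary classes $o_k \in H^k(S^{2n+1}; \pi_{k-1}(S^2))$. Because the integral cohomology of $S^{2n+1}$ vanishes in degrees $1 \le k \le 2n$, every intermediate $o_k$ is automatically zero, and the deformation proceeds unobstructed through the $2n$-skeleton. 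The only potentially nonzero obstruction is
\begin{equation*}
o_{2n+1} \in H^{2n+1}(S^{2n+1}; \pi_{2n}(S^2)) \iso \pi_{2n}(S^2),
\end{equation*}
and by Serre's theorem $\pi_k(S^2)$ is finite for every $k \ne 2$, hence $\pi_{2n}(S^2)$ is finite whenever $n \ge 2$.

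It remains to pass from ``torsion obstruction'' to ``rationally trivial K-theory class''. After the deformation the family factors, up to homotopy on the top cell, through the Mickelsson image of $U_2 \simeq S^1 \times S^3$, so $[\dx]$ lies in the image of
\begin{equation*}
\pi_{2n+1}(U_2) \iso \pi_{2n+1}(S^3) \longrightarrow \pi_{2n+1}(\F^{sa}) \iso K^1(S^{2n+1}) \iso \Z.
\end{equation*}
Since $\pi_{2n+1}(S^3)$ is finite for $n \ge 2$ by Serre, and any homomorphism from a finite group to $\Z$ is zero, the image is trivial, giving $[\dx] \otimes 1 = 0$ in $K^1(S^{2n+1}) \otimes \Q$. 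The main obstacle will be justifying the Mickelsson factorization through $U_2$: Proposition~\ref{Mickelsson} asserts that $\mu_*$ is an isomorphism from $U_\infty$, but does not explicitly produce a factorization through $U_2$ for bounded-multiplicity families. One viable route is to refine the cellular construction by realizing each inductive deformation via a Mickelsson family on the corresponding cell of $U_2$, using the vanishing of the relevant $\pi_{k-1}(S^3)$-valued intermediate obstructions to assemble these local models into a single map $S^{2n+1} \to U_2$.
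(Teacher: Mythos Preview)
Your obstruction-theoretic reduction is sound and parallels the paper: since $\spf=0$ on $S^{2n+1}$ a spectral exhaustion exists, the cell-by-cell separation of $\mu_0$ from $\mu_1$ is obstructed by classes living in $H^{k}(S^{2n+1};\pi_{k-1}(S^2))$, all intermediate groups vanish, and the only possible survivor sits in the top degree with coefficients in a finite group (Serre). Up to this point you and the paper agree.

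The gap is the last paragraph. Knowing that the final obstruction class lies in a finite group does \emph{not}, by itself, say anything about the K-theory class of $\{D_x\}$; you still need a mechanism linking ``obstruction is $N$-torsion'' to ``$[\dx]$ is $N$-torsion in $K^1$''. Your proposed mechanism is a factorization, up to homotopy, through the Mickelsson image of $U_2$. But nothing in the deformation produces such a factorization: separating $\mu_0<\mu_1$ over the $2n$-skeleton gives a rank-one eigenprojection field there, not a map to $U_2$, and the Mickelsson map goes in the wrong direction (from $U_n$ to families, not back). You acknowledge this yourself, and the ``viable route'' you sketch---building local $U_2$-models and gluing via $\pi_{k-1}(S^3)$ obstructions---is not the structure the eigenspace data gives you; the natural target is $Gr_1(\C^2)=S^2$, not $S^3$ or $U_2$.

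The paper closes the gap differently and more directly. Let $N$ be the order of the finite group in which the top obstruction lives. One considers the $N$-fold sum $N\cdot\dx$ (orthogonal direct sum of $N$ copies). On each top simplex $\Delta'$ the $N$ separate rank-two eigenbundles give a map into the $N$-fold product of pairs $(Gr_1(\h),Gr_1(\C^2))$; under the addition-induced map on relative homotopy this product obstruction is sent to $N$ times the original class, hence to zero. A commutative diagram then shows the combined obstruction for $N\cdot\dx$ dies in $\pi_{2n}(Gr_1(\h),Gr_N(\C^{2N}))$, so the deformation goes through on the top cell and $N\cdot\dx$ is actually trivial by Proposition~\ref{trivial}. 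Thus $[\dx]$ is torsion in $K^1(S^{2n+1})\cong\Z$, hence rationally zero. This additivity-of-obstruction argument is the missing idea; replacing it with a hoped-for $U_2$ factorization does not work as stated.
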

\begin{proof}
  We sketch the argument. Note that given a separation over an open
  set, we get a continuous eigenprojection field over that set. Thus,
  as above, we may obtain an eigenprojection field over a neighborhood
  of the 2-skeleton.  For the induction step, let $k < 2n+1$ and assume
  that in a neighborhood of the $k$-skeleton we have $\mu_{-1}(x) <
  \mu_{0}(x) < \mu_{1}(x)$.  Thus there is a rank 1 eigenprojection
  field over this neighborhood.  We will deform the family so that
  this property holds over the $k$-skeleton, hence on a neighborhood
  of it.  

  We now try to extend the eigenprojection field over the 3-skeleton.
  Proceeding as above by flattening and deforming, we obtain an eigenprojection
  field over the boundaries of 3-simplices.  We take a slightly
  different approach to complete the deformation process.  We first try to extend to
  simply a general projection field.  This can be done if the
  eigenprojection fields defined on the boundaries of the 3-simplices,
  $S^{2} \to Gr_{1}(\h)$, are null-homotopic.  Since $Gr_{1}(\h) = \C
  P ^{\infty}$, this is not automatic.  However,  obstruction theory applies and
  there is a class in $H^{3}(S^{2n+1},\pi_{2}(Gr_{1}(\h))$ which must vanish
  in order for the extension to exist (after going back and redefining
  over lower skeleta).  Since we are working with a sphere of
  dimension greater than 5, this group vanishes and the extension
  exists.

  We now proceed by induction.  We have a field over all of $S^{2n+1}$ which
  is an eigenprojection field over a neighborhood of the $k$-skeleton.
  We next try to push this field down to be an eigenprojection field
  over the $(k+1)$-skeleton.  We consider the set of $(k+1)$-simplices
  and their boundaries.  There are two cases.  If a $(k+1)$-simplex
  doesn't contain any singular points, then we have two projection
  fields over it--an eigenprojection field and the general one we just
  obtained.  Any two are homotopic relative to the boundary because
  only the second homotopy group of $Gr_{1}(\h)$ is non-zero.  We will
  assign 0 to such a simplex.  For the others, using the flattening
  procedure described above, we will get an element in the relative
  homotopy group, $\pi_{k}(Gr_{1}(\h),Gr_{1}(\C^{2})) \iso
  \pi_{k-1}(Gr_{1}(\C^{2})) = \pi_{k-1}(S^{2})$.  This will define an
  obstruction class in $H^{k+1}(S^{2n+1},(S^{2n+1})^{(k)};\pi_{k-1}(S^{2}))$.

  We note that, for any complex $X$, one has $H^{j}(X,X^{(k)}) =0$ if
  $j \leq k$, and $H^{j}(X,X^{k}) \iso H^{j}(X)$ if $j > k$.  Thus,
  the only group which can be non-zero is
  $H^{2n+1}(S^{2n+1},(S^{2n+1})^{(2n)}); \pi_{2n-1}(S^{2}))$, a
  torsion group. For $k<2n+1$ we extend the restriction of the
  eigenprojection field to the $(k-1)$-skeleton to the $(k+1)$-skeleton
  and use this to deform the family so it is separated there.  For the
  case of the top dimension the following procedure takes care of this
  obstacle.

  We have a possibly non-trivial, obstruction class which would belong
  to $H^{2n+1}(S^{2n+1};\pi_{2n}(Gr_{1}(\h),Gr_{1}(\C^{2})))$.  Since
  $n \geq 2$, $\pi_{2n}(Gr_{1}(\h),Gr_{1}(\C^{2})) \iso
  \pi_{2n-1}(S^{2})$ is a finite group, say of order $N$.  We will
  show that $N\dx$ is trivial, and hence $\dx$ is rationally trivial.
  Let $\Delta$ be an $2n+1$-simplex which meets $C_{0,1}$ in its
  interior and let $\Delta'$ be a sub-simplex obtained by flattening.
  Thus, we may assume that the family has multiplicity two at each
  point of $\Delta'$ and there is a rank 1 eigenprojection field along
  the boundary.  Now, consider the family $N\dx$.  We trivialize each
  2-dimensional eigenbundle separately and get a map $$c(\Delta):
  (\Delta', \partial \Delta') \to (Gr_{1}(\h) \x \ldots \x Gr_{1}(\h),
  Gr_{1}(\C^{2}) \x \ldots \x Gr_{1}(\C^{2}))$$ defining a class in
  $\pi_{2n}(Gr_{1}(\h), Gr_{1}(\C^{2})) \oplus \ldots \oplus
  \pi_{2n}(Gr_{1}(\h), Gr_{1}(\C^{2}))$.  The map induced by addition
  on homotopy groups sends the class of this map to zero in
  $\pi_{2n}(Gr_{1}(\h), Gr_{1}(\C^{2}))$. From the commutative
  diagram,
  \begin{equation*}
    \begin{diagram}
      \node{\pi_{2n}(Gr_{1}(\h), Gr_{1}(\C^{2})) \oplus \ldots \oplus \pi_{2n}(Gr_{1}(\h), Gr_{1}(\C^{2}))}\arrow{s,l}{+}\arrow[2]{e,t}{inc_{*}}\node{}\node{\pi_{2n}(Gr_{1}(\h), Gr_{N}(\C^{2N}))}\\
      \node{\pi_{2n}(Gr_{1}(\h), Gr_{1}(\C^{2}))}\arrow{ene,b}{inc_{*}}\node{}
    \end{diagram}
  \end{equation*}
  we see that we may deform the map $c(\Delta):  \Delta' \to
  Gr_{1}(\h)$ to one mapping into $Gr_{N}(\C^{2N})$.  Using the same deformation as in the
  rank 1 case, we obtain a new family for which $\mu_{0}(x) <
  \mu_{1}(x)$ for all $x \in \Delta'$.  Doing this process over each
  $n$-simplex yields the required trivial family.  
\end{proof}

\section{Concluding remarks}

The intention of the present paper was to begin a study of the manner
in which the variation of the eigenvalues and eigenspaces of a family
of self-adjoint Fredholm operators effects the K-theory class of the
family.  While we showed that the behavior of the multiplicity
function can effect the topology of the family, there is much yet to
be resolved.  Some questions which seem essential to making further
progress are listed below.

\begin{itemize}

\item  How is the 3-dimensional integral cohomology class which arises
  when applying obstruction theory related to the index gerbe, c.f. Lott,
  \cite{Lott:2002}.  If they determine each other, can one obtain all the
  components of the Chern character of the family using these methods?
\item Suppose $[\alpha] \in K^{1}(X)$ and there is an $\alpha'$ with
  $\alpha \simeq \alpha'$ and with
  the multiplicity of $\alpha'$ bounded by $n$.  Let $\mathcal M ([\alpha])$ be
  the least such $n$.  If $[\alpha] \neq 0$ then  $\mathcal
  M([\alpha]) > 1$.  How are the topological invariants of $[\alpha]$
  related to $\mathcal M ([\alpha])$? 
\item  The equivalence relation generated by the ``moves'' we are
  using to deform the families is possibly stronger than
  homotopy.  Is one obtaining a more refined type of K-theory in this way?
\item It would be interesting to know in what sense the K-theory class
  of a family is determined by a finite part of the spectrum.  To be
  more precise, suppose $\dx$ is a family with multiplicity bounded by
   $n$.  Is there an integer $\mathcal N (n)$ so that  the part of
   the graph of the family, $\bigcup_{|k| < \mathcal N (n)}
   \mu_{k}(X)$, along with the corresponding eigenspaces, determines
   whether the family is trivial (or rationally trivial) in K-theory? 
 \item Although various partial results similar to those in the last
   section are known to the authors, the appropriate general statement
   has not yet been obtained.  We expect that the following will hold.
   Assume the parameter space of the family, $X$, is an n-dimensional
   finite complex and the spectral flow of the family is zero.
   Further, suppose there is an element of the exhaustion, $\mu_{k}$
   such that the multiplicity at any point of $\mu_{k}(X)$ is $N$ or
   $N+1$, where $N > n$.  Then if the 3-dimensional obstruction
   obtained above is zero, the family is rationally trivial in
   K-theory.

\end{itemize}
  

\providecommand{\bysame}{\leavevmode\hbox to3em{\hrulefill}\thinspace}
\providecommand{\MR}{\relax\ifhmode\unskip\space\fi MR }
\providecommand{\MRhref}[2]{%
  \href{http://www.ams.org/mathscinet-getitem?mr=#1}{#2}
}
\providecommand{\href}[2]{#2}

\end{document}